\newtheorem{proposition}{Proposition}[section]
\newtheorem{lemma}[proposition]{Lemma}
\newtheorem{theorem}[proposition]{Theorem}
\newtheorem{corollary}[proposition]{Corollary}
\def\la{\lambda}
\def\La{\Lambda}
\def\ep{\varepsilon}
\def\l{{\langle}}
\def\r{\rangle}
\newcommand{\wt}{\widetilde}
\def\R{{\mathbb R}}
\def\E{{\mathbb E}}
\def\P{{\mathbb P}}
\makeatletter \@addtoreset{equation}{section} \makeatother
\newcommand {\qed}%
{%
    {}\hfill
    {}\hfill
    {$\square $}%
    \vspace {0.3cm}%
    \pagebreak [2]%
    \par
}%
\newenvironment{proof}[1]{%
    \vspace{0.3cm}%
    \pagebreak [2]%
    \par%
    \noindent {\bf  Proof~#1\ }}{\qed}%
\newenvironment{remark}{%
    \vspace{0.3cm} \pagebreak [2]%
    \par%
    \refstepcounter{proposition}
    \noindent%
    {\bf Remark~\theproposition\  }}{\qed}%
\begin{document}

\title {Excursion Probability of Certain Non-centered Smooth Gaussian Random Fields}
\author{Dan Cheng\\ North Carolina State University }

\maketitle

\begin{abstract}
Let $X = \{X(t): t\in T \}$ be a non-centered, unit-variance, smooth Gaussian random field indexed on some parameter space $T$, and let $A_u(X,T) = \{t\in T: X(t)\geq u\}$ be the excursion set of $X$ exceeding level $u$. Under certain smoothness and regularity conditions, it is shown that, as $u\to \infty$, the excursion probability $\P\{\sup_{t\in T} X(t)\ge u \}$ can be approximated by the expected Euler characteristic of $A_u(X,T)$, denoted by $\E\{\chi(A_u(X,T))\}$, such that the error is super-exponentially small. This verifies the expected Euler characteristic heuristic for a large class of non-centered smooth Gaussian random fields and provides a much more accurate approximation compared with those existing results by the double sum method. The explicit formulae for $\E\{\chi(A_u(X,T))\}$ are also derived for two cases: (i) $T$ is a rectangle and $X-\E X$ is stationary; (ii) $T$ is an $N$-dimensional sphere and $X-\E X$ is isotropic.
\end{abstract}

\noindent{\small{\bf Key Words}: Excursion probability, Gaussian random fields, Euler characteristic, rectangle, sphere, super-exponentially small.}

\noindent{\small{\bf AMS 2010 Subject Classifications}:\ 60G15, 60G60, 60G70.}

\section{Introduction}
Let $X = \{X(t): t\in T \}$ be a real-valued Gaussian random field living on some parameter space $T$. The excursion probability $\P\{\sup_{t\in T} X(t)\ge u \}$ has been extensively studied in the literature due to its importance in both theory and applications in many areas. We refer to the survey Adler (2000) and monographs Piterbarg (1996a), Adler and Taylor (2007) and Aza\"is and Wschebor (2009) for the history, recent developments and related applications on this subject. To approximate the excursion probability for high exceeding level $u$, many authors have developed various powerful tools, including the double sum method [Piterbarg (1996a)], the tube method [Sun (1993)], the expected Euler characteristic approximation [Adler (2000), Taylor and Adler (2003), Taylor et al. (2005), Adler and Taylor (2007)] and the Rice method [Aza\"is and Delmas (2002), Aza\"is and Wschebor (2008, 2009)].

In particular, the expected Euler characteristic approximation establishes a very general and profound result, building an interesting connection between the excursion probability and the geometry of the field. It was first rigorously proved by Taylor et al. (2005) [see also Theorem 14.3.3 in Adler and Taylor (2007)], showing that for a centered, unit-variance, smooth Gaussian random field, under certain conditions on the regularity of $X$ and topology of $T$,
\begin{equation}\label{Eq:MEC approx error}
\P \left\{\sup_{t\in T} X(t) \geq u \right\} = \E\{\chi(A_u(X,T))\}(1 + o\big(e^{- \alpha u^2})) \quad  {\rm as} \ u\to \infty,
\end{equation}
where $\chi(A_u(X,T))$ is the Euler characteristic of the excursion set $A_u(X,T) = \{t\in T: X(t)\geq u\}$ and $\alpha > 0$ is some constant. This verifies the ``Expected Euler Characteristic Heuristic'' for centered, unit-variance, smooth Gaussian random fields. Similar results can also be found in Aza\"is and Wschebor (2009) where the Rice method was applied. It had also been further developed by Cheng and Xiao (2014b) that \eqref{Eq:MEC approx error} holds for certain Gaussian fields with stationary increments which have nonconstant variances. However, to the best of our knowledge, among the existing works on deriving the expected Euler characteristic approximation \eqref{Eq:MEC approx error}, the Gaussian field $X$ is always assumed to be centered. In fact, the study of excursion probability for non-centered Gaussian fields is also very valuable since the varying mean function plays an important role in many models. Especially, when the Gaussian field is non-smooth, several results on the excursion probability have been obtained via the double sum method [see, for examples, Piterbarg (1996a), Piterbarg and Stamatovich (1998), Husler and Piterbarg (1999)].


In this paper, we study the excursion probability $\P\{\sup_{t\in T} X(t)\ge u \}$ for non-centered, unit variance, smooth [see condition ({\bf H}1) below] Gaussian random fields. As the first contribution, we obtain in Theorem \ref{Thm:MEC} that, in general, the expected Euler characteristic approximation \eqref{Thm:EECA} holds for such non-centered Gaussian fields when $T\subset \R^N$ is a compact rectangle. It shows that, comparing with the double sum method for non-smooth non-centered Gaussian fields [see Piterbarg and Stamatovich (1998) for example], we are able to obtain a much more accurate approximation for the excursion probability of smooth non-centered Gaussian fields such that the error is super-exponentially small. This is because the expected Euler characteristic approximation takes into account the effect of $X$ over the boundary of $T$, which is ignored in the double sum method. By similar arguments in Aza\"is and Delmas (2002), such approximation can also be easily extended to the cases when $T\subset \R^N$ is a compact and convex set with smooth boundary or a compact and smooth manifold without boundary, see Theorem \ref{Thm:EECA-T}.

To apply the approximation in practice, one needs to find an explicit formula for the expected Euler characteristic $\E\{\chi(A_u(X,T))\}$. Under the assumption of centered Gaussian fields, Taylor and Adler (2003) showed a very nice formula for $\E\{\chi(A_u(X,T))\}$ [see also Adler and Taylor (2007)] , involving the Lipschitz-Killing curvatures of the excursion set $A_u(X,T)$. However, there is lack of research to evaluate $\E\{\chi(A_u(X,T))\}$ for non-centered Gaussian fields. We provide here explicit formulae of $\E\{\chi(A_u(X,T))\}$ for two cases of non-centered Gaussian fields: (i) $T$ is a rectangle and $X-\E X$ is stationary; (ii) $T$ is an $N$-dimensional sphere and $X-\E X$ is isotropic; see respetively Theorems \ref{Thm:MEC} and \ref{Thm:MECSphere}. The results show that,  the mean function of the field does make the formula of $\E\{\chi(A_u(X,T))\}$ much more complicated than that of the centered field. In real applications, one usually needs to use the Laplace method to obtain explicit asymptotics for $\E\{\chi(A_u(X,T))\}$.

\section{Excursion Probability}

\subsection{Gaussian Random Fields on Rectangles}
We first consider the Gaussian field $X = \{X(t): t\in T \}$, where $T\subset \R^N$ is a compact rectangle. Throughout this paper, unless specified otherwise, $X$ is assumed to be unit-variance and $T$ denotes a compact rectangle. For a function $f(\cdot) \in C^2(T)$, we write $\frac{\partial f(t)}{\partial t_i} = f_i (t)$ and $\frac{\partial^2 f(t)}{\partial t_i\partial t_j} = f_{ij}(t)$. Denote by $\nabla f(t)$ and $\nabla^2 f(t)$ the column vector $(f_1(t), \ldots , f_N(t))^{T}$ and the $N\times N$ matrix $(f_{ij}(t))_{ i, j = 1, \ldots, N}$, respectively. We shall make use of the following smoothness condition ({\bf H}1) and regularity condition ({\bf H}2) for approximating the excursion probability, and also a weaker regularity condition ({\bf H}2$'$) for evaluating the expected Euler characteristic $\E\{\chi(A_u(X,T))\}$ [note that ({\bf H}2) implies ({\bf H}2$'$)].

\begin{itemize}
\item[({\bf H}1).] $X(\cdot) \in C^2(T)$ almost surely and its second derivatives satisfy the
\emph{uniform mean-square H\"older condition}: there exist constants $L>0$ and $ \eta \in (0, 1]$
such that
\begin{equation}
\E(X_{ij}(t)-X_{ij}(s))^2 \leq L d(t,s)^{2\eta}, \quad \forall t,s\in T,\ i, j= 1, \ldots, N,
\end{equation}
where $d(t,s)$ is the distance of $t$ and $s$.
\end{itemize}

\begin{itemize}
\item[({\bf H}2).]  For every pair $(t, s)\in T^2$ with $t\neq s$, the Gaussian random vector
$$(X(t), \nabla X(t), X_{ij}(t),\,
 X(s), \nabla X(s), X_{ij}(s), 1\leq i\leq j\leq N)$$
is  non-degenerate.
\end{itemize}

\begin{itemize}
\item[({\bf H}2$'$).] For every $t\in T$,  $(X(t), \nabla X(t), X_{ij}(t), 1\leq i\leq j\leq N)$
is non-degenerate.
\end{itemize}

We may write $T=\prod^N_{i=1}[a_i, b_i]$, $-\infty< a_i<b_i<\infty$. Following the notation on page 134 in Adler and Taylor (2007),  we shall show that $T$ can be decomposed into several faces of lower dimensions, based on which the Euler characteristic of the excursion set can be formulated.

A face $J$ of dimension $k$ is defined by fixing a subset $\sigma(J) \subset \{1, \ldots, N\}$ of size $k$ (If $k=0$, we have  $\sigma(J) = \emptyset$ by convention) and a subset $\varepsilon(J) = \{\varepsilon_j, j\notin \sigma(J)\} \subset \{0, 1\}^{N-k}$ of size $N-k$, so that
\begin{equation*}
\begin{split}
J= \{ t=(t_1, \ldots, t_N) \in T: \, &a_j< t_j <b_j  \ {\rm if} \ j\in \sigma(J), \\
&t_j = (1-\ep_j)a_j + \ep_{j}b_{j}  \ {\rm if} \ j\notin \sigma(J)  \}.
\end{split}
\end{equation*}
Denote by $\partial_k T$ the collection of all $k$-dimensional faces in $T$. Then
the interior of $T$ is given by $\overset{\circ}{T}=\partial_N T$ and the boundary
of $T$ is given by $\partial T = \cup^{N-1}_{k=0}\cup _{J\in \partial_k T} J$. For
$J\in \partial_k T$, denote by
$\nabla X_{|J}(t)$ and $\nabla^2 X_{|J}(t)$ the column vector $(X_{i_1} (t),
\ldots, X_{i_k} (t))^T_{i_1, \ldots, i_k \in \sigma(J)}$ and the $k\times k$ matrix
$(X_{mn}(t))_{m, n \in \sigma(J)}$, respectively.

If $X(\cdot)\in C^2(T)$ and it is a Morse function a.s. [cf. Definition 9.3.1 in Adler and Taylor (2007)], then according to Corollary 9.3.5 or pages 211-212 in Adler and Taylor (2007), the Euler characteristic of the excursion set $A_u(X,T) = \{t\in T: X(t)\geq u\}$ is given by
\begin{equation}\label{Eq:def of Euler charac}
\chi(A_u(X,T))= \sum^N_{k=0}\sum_{J\in \partial_k T}(-1)^k\sum^k_{i=0} (-1)^i \mu_i(J)
\end{equation}
with
\begin{equation*}
\begin{split}
\mu_i(J) & := \# \big\{ t\in J: X(t)\geq u, \nabla X_{|J}(t)=0, \text{index} (\nabla^2 X_{|J}(t))=i, \\
  & \qquad \quad \varepsilon^*_jX_j(t) \geq 0 \ {\rm for \ all}\ j\notin \sigma(J) \big\},
\end{split}
\end{equation*}
where $\ep^*_j=2\ep_j-1$ and the index of a matrix is defined as the number of its negative eigenvalues.

For $t\in J\in \partial_k T$, let
\begin{equation}\label{Def:E(J)}
\begin{split}
\La_J(t)=(\la_{ij}(t))_{i,j\in \sigma(J)}&:= ({\rm Cov}(X_i(t), X_j(t)))_{i,j\in \sigma(J)} = {\rm Cov}(\nabla X_{|J}(t), \nabla X_{|J}(t)), \\
 \{J_1, \cdots, J_{N-k}\} &= \{1, \cdots, N\}\backslash \sigma(J),\\
E(J) &= \{(t_{J_1}, \cdots, t_{J_{N-k}})\in \R^{N-k}: t_j\ep_j^*\geq 0,  j= J_1, \cdots, J_{N-k}\}.\\
\end{split}
\end{equation}
Since $X$ has unit variance, ${\rm Cov}(X(t), \nabla^2 X_{|J}(t)) = -{\rm Cov}(\nabla X_{|J}(t), \nabla X_{|J}(t)) = -\La_J(t)$, which is negative definite. Define the number of \emph{extended outward maxima}
above level $u$ as
\begin{equation*}
\begin{split}
M_u^E (J) & := \# \big\{ t\in J: X(t)\geq u, \nabla X_{|J}(t)=0, \text{index} (\nabla^2 X_{|J}(t))=k, \\
  & \qquad \qquad \qquad \varepsilon^*_jX_j(t) \geq 0 \ {\rm for \ all}\ j\notin \sigma(J) \big\}\\
& = \big\{ t\in J: X(t)\geq u, \nabla X_{|J}(t)=0, \text{index} (\nabla^2 X_{|J}(t))=k, \\
  & \qquad \qquad \quad (X_{J_1}(t), \cdots, X_{J_{N-k}}(t))\in E(J) \big\}.
\end{split}
\end{equation*}
By similar arguments in Piterbarg (1996b) or Cheng and Xiao (2014b), we have the following bounds for the excursion probability:
\begin{equation}\label{Ineq:bounds}
\begin{split}
\sum_{k=0}^N\sum_{J\in\partial_k T} \E \{M_u^E (J) \} &\ge \P \left\{\sup_{t\in T} X(t) \geq u \right\} \\
& \ge \sum_{k=0}^N\sum_{J\in\partial_k T} \left ( \E \{M_u^E (J) \}
- \frac{1}{2}\E \{M_u^E (J)(M_u^E (J) -1) \} \right ) \\
& \quad - \sum_{J\neq J'} \E\{M_u^E (J) M_u^E (J')\}.
\end{split}
\end{equation}

We call a function $h(u)$ \emph{super-exponentially small} [when compared with  $\P (\sup_{t\in T} X(t) \geq u )$], if there exists a constant $\alpha >0$ such that $h(u) = o(e^{-\alpha u^2 - u^2/2})$ as $u \to \infty$. The sketch for proving the expected Euler characteristic approximation \eqref{Eq:MEC approx error} consists of two steps. The first step, which is established in Lemma \ref{Lem:approximateMuE} below,  is to show that the difference between the upper bound in \eqref{Ineq:bounds} and the expected Euler characteristic $\E\{\chi(A_u(X,T))\}$ is super-exponentially small. Then we prove that the upper bound in \eqref{Ineq:bounds} makes the major contribution since the last two terms in the lower bound in \eqref{Ineq:bounds} are super-exponentially small, see Corollary \ref{Cor:Piterbarg} and Lemmas \ref{Lem:disjoint faces} and \ref{Lem:adjacent faces} below.

\begin{lemma}\label{Lem:approximateMuE}
Let $X = \{X(t): t\in T \}$ be a Gaussian random field satisfying $({\bf H}1)$ and $({\bf H}2')$. Then for each
$J\in \partial_k T$ with $k\geq 1$, there exists some constant $\alpha>0$ such that
\begin{equation}\label{Eq:major-term}
\E \left\{M_u^E (J) \right\} = \E \left\{(-1)^k\sum^k_{i=0} (-1)^i \mu_i(J)\right\} (1+o(e^{-\alpha u^2})).
\end{equation}
\end{lemma}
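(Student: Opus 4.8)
The plan is to observe that $M_u^E(J)$ coincides with $\mu_k(J)$: both count the points $t\in J$ at which $X(t)\ge u$, $\nabla X_{|J}(t)=0$, the Hessian $\nabla^2 X_{|J}(t)$ has index $k$ (i.e.\ is negative definite, so that $t$ is a local maximum of $X$ within $J$), and the extended outward condition $\ep_j^* X_j(t)\ge 0$ holds for $j\notin\si(J)$. Expanding the right-hand side of \eqref{Eq:major-term} as $\E\{\mu_k(J)\}+\sum_{i=0}^{k-1}(-1)^{k-i}\E\{\mu_i(J)\}$, the claim is therefore equivalent to the assertion that every lower-index term is super-exponentially small relative to the leading one, namely
\[ \E\{\mu_i(J)\}=o(e^{-\a u^2})\,\E\{\mu_k(J)\},\qquad 0\le i\le k-1, \]
for some $\a>0$, which I would prove separately for each such $i$.

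First I would invoke the Kac--Rice formula, whose hypotheses are supplied by $({\bf H}1)$ and $({\bf H}2')$, to write for each $0\le i\le k$
\[ \E\{\mu_i(J)\}=\int_J\int_u^\infty g_i(t,x)\,p_{X(t)}(x)\,p_{\nabla X_{|J}(t)}(0)\,dx\,dt, \]
with $g_i(t,x):=\E\{|\det\nabla^2 X_{|J}(t)|\,\I_{\{{\rm index}=i\}}\I_{E}\mid X(t)=x,\ \nabla X_{|J}(t)=0\}$ and $\I_E$ the indicator of $\{(X_{J_1}(t),\ldots,X_{J_{N-k}}(t))\in E(J)\}$. Since the unit-variance assumption gives ${\rm Cov}(X(t),\nabla X_{|J}(t))=0$, the joint density factors as displayed, with $p_{X(t)}(x)=\phi(x-m(t))$ (where $m:=\E X$) and $p_{\nabla X_{|J}(t)}(0)$ the Gaussian density of the in-face gradient at the origin. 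The two outer densities are common to all indices, so the entire disparity between $\mu_i$ and $\mu_k$ is carried by the conditional expectations $g_i$.

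The core of the argument is a uniform bound on $g_i$ for $i<k$. By the unit-variance identity ${\rm Cov}(X(t),\nabla^2 X_{|J}(t))=-\La_J(t)$ recorded above, the conditional mean of $\nabla^2 X_{|J}(t)$ given $X(t)=x$ and $\nabla X_{|J}(t)=0$ equals $-\La_J(t)\,x$ plus a term bounded in $x$ (arising from the deterministic mean $m$, its derivatives, and the regression on $\nabla X_{|J}$), while the conditional covariance does not depend on $x$. By $({\bf H}2')$, the compactness of $\ol J$, and the continuity furnished by $({\bf H}1)$, one has $\La_J(t)\succeq\de I$ and a uniformly bounded conditional covariance over $\ol J$. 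Hence for large $x$ the conditional Hessian is Gaussian with a mean $-\La_J(t)x$ that is strongly negative definite; the event $\{{\rm index}=i\}$ with $i<k$ demands at least one positive eigenvalue, forcing a fluctuation of order $x$, so a Gaussian large-deviation bound yields $\P\{{\rm index}=i\mid X(t)=x,\ \nabla X_{|J}(t)=0\}\le Ce^{-cx^2}$ uniformly in $t\in\ol J$ for some $c>0$. Because the conditional mean of the Hessian is linear in $x$, its determinant has polynomial conditional moments, $\E\{|\det\nabla^2 X_{|J}(t)|^2\mid X(t)=x,\ \nabla X_{|J}(t)=0\}\le C(1+x^{2k})$; combining this with the previous estimate via Cauchy--Schwarz (and bounding $\I_E\le 1$) gives $g_i(t,x)\le C(1+x^k)e^{-cx^2/2}$ for $i<k$.

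Finally I would substitute these bounds into the Kac--Rice integrals. For $i<k$ the extra factor $e^{-cx^2/2}$ makes $\int_u^\infty(1+x^k)e^{-cx^2/2}\phi(x-m(t))\,dx$ of order $u^{k-1}e^{-(1+c)u^2/2+m(t)u}$, whereas for $i=k$—where $\{{\rm index}=k\}$ is the typical event, $|\det\nabla^2 X_{|J}(t)|\approx x^k\det\La_J(t)$, and $\P(E)$ stays bounded below—the corresponding integral is bounded below to order $u^{k-1}e^{-u^2/2+m(t)u}$. Both carry the same $t$-dependent weight $e^{m(t)u}$ against bounded prefactors, so after integrating over $J$ the ratio is controlled by the pointwise factor $e^{-cu^2/2}$ times an $O(1)$ quantity, giving $\E\{\mu_i(J)\}/\E\{\mu_k(J)\}=o(e^{-\a u^2})$ for any $\a<c/2$, which is exactly the required reduction. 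I expect the main obstacle to be the uniform large-deviation estimate on the wrong-index probability: one must render the description ``$-\La_J(t)x$ plus a bounded perturbation'' of the conditional Hessian quantitative, and verify that $\La_J(t)\succeq\de I$ together with the conditional-covariance bound hold uniformly over the face, so that the constants $c,C$—and hence the resulting $\a$—can be chosen independently of $t\in J$.
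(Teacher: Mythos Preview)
Your approach is correct and rests on the same core observation as the paper's: conditionally on $X(t)=x$ and $\nabla X_{|J}(t)=0$, the Hessian $\nabla^2 X_{|J}(t)$ has mean $-\La_J(t)x$ plus a bounded perturbation and $x$-independent covariance, so for large $x$ any index other than $k$ is a Gaussian large deviation of size $\asymp x$. The packaging, however, differs in two respects worth noting.

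First, the paper does not treat the $\mu_i(J)$ for $i<k$ term by term. Instead it rewrites the Kac--Rice integral for $\E\{M_u^E(J)\}$ via the change of variables $V(t)=Q_t\nabla^2 X_{|J}(t)Q_t - Q_t\nabla^2 m_{|J}(t)Q_t + xI_k$ (with $Q_t=\La_J(t)^{-1/2}$), so that the conditional law of $V(t)$ is free of $x$; the constraint $\{\text{index}=k\}$ becomes $\{\|(v_{ij})\|<x/c\}$ up to a set on which the error term $W$ lives, and the complementary integral without the constraint is identified directly, via the identity $\sum_i(-1)^i|\det M|\I_{\{M\in\mathcal D_i\}}=\det M$, as the signed sum $\E\{(-1)^k\sum_i(-1)^i\mu_i(J)\}$. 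This handles all wrong indices at once and works with $\det$ rather than $|\det|$, so no Cauchy--Schwarz step is needed. Second, because the paper bounds the additive error $W$ as $o(e^{-\a u^2-u^2/2})$ absolutely, it does not need the separate lower bound on $\E\{\mu_k(J)\}$ that your ratio formulation calls for; the multiplicative form $(1+o(e^{-\a u^2}))$ then only requires the trivial fact that the signed sum is at least of order $u^{-C}e^{-u^2/2}$. Your route via $g_i(t,x)\le C(1+x^k)e^{-cx^2/2}$ does in fact give the same absolute estimate $\E\{\mu_i(J)\}=o(e^{-\a u^2-u^2/2})$ for $i<k$, so the lower bound on $g_k$ (and the attendant worry about the joint behaviour of $\I_E$ and the Hessian) can be replaced by the same one-line observation. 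Either argument delivers the lemma; the paper's is a bit slicker, yours a bit more elementary.
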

\begin{proof}
\ To simplify the notation, without loss of generality, we assume $\sigma(J)= \{1, \ldots, k\}$ and that all elements in $\ep(J)$ are 1, which implies $E(J)=\R^{N-k}_+$. Let $\mathcal{D}_i$ be the collection of all $k \times k $ matrices with index $i$. By the Kac-Rice metatheorem [cf. Theorem 11.2.1 or Corollary 11.2.2 in Adler
and Taylor (2007)], $\E \{M_u^E (J)\}$ equals
\begin{equation}\label{Eq:mean-mu}
\begin{split}
&\int_J \E\{ |\text{det} \nabla^2 X_{|J}(t)| \mathbbm{1}_{\{\nabla^2 X_{|J}(t) \in \mathcal{D}_k\}}\mathbbm{1}_{\{X(t)\geq u\}}\mathbbm{1}_{\{ (X_{k+1}(t), \cdots, X_{N}(t))\in \R^{N-k}_+\}}  | \nabla X_{|J}(t)=0 \}\\
&\quad \times p_{\nabla X_{|J}(t)}(0)  dt\\
&= (-1)^k \int_J \, dt\int^\infty_u dx\int_0^\infty dy_{k+1} \cdots \int _0^\infty dy_{N}\\
&\quad \E\{\text{det} \nabla^2 X_{|J}(t) \mathbbm{1}_{\{\nabla^2 X_{|J}(t) \in \mathcal{D}_k\}} | X(t)=x, X_{k+1}(t)=y_{k+1}, \cdots, X_{N}=y_{N}, \nabla X_{|J}(t)=0 \} \\
&\quad \times p_{X(t), X_{k+1}(t), \cdots, X_{N}(t)}(x, y_{k+1}, \cdots, y_{N}| \nabla X_{|J}(t)=0) p_{\nabla X_{|J}(t)}(0).
\end{split}
\end{equation}

Since $\La_J(t)$ is positive definite for every $t\in J$, there exists a $k \times k $ positive definite matrix $Q_t$ such that
$Q_t\La_J(t)Q_t = I_k$, where $I_k$ is the $k\times k$ identity matrix. We write $\nabla^2 X_{|J}(t)= Q_t^{-1}Q_t\nabla^2 X_{|J}(t)Q_tQ_t^{-1}$ and let $a_{ij}^l(t) = {\rm Cov}(X_l(t), (Q_t \nabla^2X_{|J}(t) Q_t)_{ij})$ for $l= 1, \cdots, N$. By the well-known conditional formula for Gaussian variables,
\begin{equation}\label{Eq:EQX}
\begin{split}
\E &\{(Q_t\nabla^2 X_{|J}(t)Q_t)_{ij} | X(t)=x, \nabla X_{|J}(t)=0, X_{k+1}(t)=y_{k+1}, \ldots, X_{N}(t)=y_{N} \}\\
&=(Q_t\nabla^2 m_{|J}(t)Q_t)_{ij} + (-\delta_{ij}, a_{ij}^1(t), \ldots, a_{ij}^N(t)) ( \text{Cov} (X(t), \nabla X(t)) )^{-1} \\
& \quad \cdot (x, 0, \ldots, 0, y_{k+1}, \cdots, y_N)^T.
\end{split}
\end{equation}
Make change of variables $V(t)= (V_{ij}(t))_{1\leq i, j \leq k}$, where
\begin{equation*}
V_{ij}(t) = (Q_t\nabla^2 X_{|J}(t)Q_t)_{ij} - (Q_t\nabla^2 m_{|J}(t)Q_t)_{ij} + x\delta_{ij},
\end{equation*}
i.e.,
\begin{equation}\label{Eq:ChangeVariables}
Q_t\nabla^2 X_{|J}(t)Q_t = V(t)  + Q_t\nabla^2 m_{|J}(t)Q_t - xI_k.
\end{equation}
Denote the density of
$$((V_{ij}(t))_{1\leq i\leq j\leq k}|X(t)=x, \nabla X_{|J}(t)=0, X_{k+1}(t)=y_{k+1}, \cdots, X_{N}(t)=y_{N})$$
by $h_{t, y_{k+1}, \ldots, y_N}(v)$, $v = (v_{ij}: 1\leq i \leq j\leq k)\in \R^{k(k+1)/2}$. It follows from \eqref{Eq:EQX} and the independence of $X(t)$ and $\nabla X(t)$ that $h_{t, y_{k+1}, \ldots, y_N}(v)$ is independent of $x$. Let $(v_{ij})$ be the abbreviation of matrix $(v_{ij})_{1\leq i, j\leq k}$. Applying \eqref{Eq:ChangeVariables} yields
\begin{equation}\label{Eq:mean critical points 2}
\begin{split}
&\quad \E\{{\rm det} (Q_t\nabla^2 X_{|J}(t)Q_t)\mathbbm{1}_{\{\nabla^2 X_{|J}(t)\in \mathcal{D}_k\}} | X(t)=x, \nabla X_{|J}(t)=0, \\
& \qquad \qquad \qquad \qquad \qquad \qquad \qquad \qquad \quad  X_{k+1}(t)=y_{k+1}, \cdots, X_{N}(t)=y_{N} \}\\
& = \E\{{\rm det} (Q_t\nabla^2 X_{|J}(t)Q_t) \mathbbm{1}_{\{Q_t\nabla^2 X_{|J}(t)Q_t\in \mathcal{D}_k\}} | X(t)=x,  \nabla X_{|J}(t)=0, \\
& \qquad \qquad \qquad \qquad \qquad \qquad \qquad \qquad \qquad  \quad \, X_{k+1}(t)=y_{k+1}, \cdots, X_{N}(t)=y_{N}\}\\
& = \int_{\{v: (v_{ij})+ Q_t\nabla^2 m_{|J}(t)Q_t - xI_k \in \mathcal{D}_k\}} {\rm det} \left((v_{ij})+ Q_t\nabla^2 m_{|J}(t)Q_t - xI_k\right) h_{t, y_{k+1}, \ldots, y_N}(v) \, dv.
\end{split}
\end{equation}
Since $Q_t\nabla^2 m_{|J}(t)Q_t$ is continuous in $t$ and $T$ is compact, there exists some constant $c>0$ such that the following relation holds for all $t\in T$ and $x$ large enough:
\begin{equation}\label{Eq:VDk}
(v_{ij})+ Q_t\nabla^2 m_{|J}(t)Q_t - xI_k \in \mathcal{D}_k, \qquad \forall \ \|(v_{ij})\| <\frac{x}{c}.
\end{equation}
Let
\begin{equation*}
\begin{split}
W&(t,x,y_{k+1}, \cdots, y_N) \\
&= \int_{\{v: (v_{ij})+ Q_t\nabla^2 m_{|J}(t)Q_t - xI_k \notin \mathcal{D}_k\}} {\rm det} \left((v_{ij})+ Q_t\nabla^2 m_{|J}(t)Q_t - xI_k\right) h_{t, y_{k+1}, \ldots, y_N}(v) \, dv.
\end{split}
\end{equation*}
Then \eqref{Eq:mean critical points 2} becomes
\begin{equation}\label{Eq:mean critical points 3}
\begin{split}
\int_{\R^{k(k+1)/2}} {\rm det} \left((v_{ij})+ Q_t\nabla^2 m_{|J}(t)Q_t - xI_k\right)h_{t, y_{k+1}, \ldots, y_N}(v) \, dv - W(t,x,y_{k+1}, \cdots, y_N).
\end{split}
\end{equation}
It follows from \eqref{Eq:VDk} that
\begin{equation*}
\begin{split}
I(t,x)&:= \int_0^\infty dy_{k+1} \cdots \int _0^\infty dy_{N}\, p_{X(t), X_{k+1}(t), \cdots, X_{N}(t)}(x, y_{k+1}, \cdots, y_{N}| \nabla X_{|J}(t)=0)\\
&\quad \times |W(t,x,y_{k+1}, \cdots, y_{N})| \\
&\leq \int_0^\infty dy_{k+1} \cdots \int _0^\infty dy_{N}\, p_{X(t), X_{k+1}(t), \cdots, X_{N}(t)}(x, y_{k+1}, \cdots, y_{N}| \nabla X_{|J}(t)=0)  \\
&\quad \times \int_{\|(v_{ij})\|\geq\frac{x}{c}} \left|\text{det} \left((v_{ij})+ Q_t\nabla^2 m_{|J}(t)Q_t-xI_k \right)\right| h_{t,y_{k+1}, \cdots, y_{N}}(v)  dv\\
&\leq p_{X(t)}(x| \nabla X_{|J}(t)=0) \int_{\|(v_{ij})\|\geq\frac{x}{c}} \left|\text{det} \left((v_{ij})+ Q_t\nabla^2 m_{|J}(t)Q_t-xI_k \right)\right| f_t(v)  dv,
\end{split}
\end{equation*}
where $f_t(v)$ is the density of $((V_{ij}(t))_{1\leq i\leq j\leq k}|X(t)=x, \nabla X_{|J}(t)=0)$ and the last inequality comes from replacing the integral domain $\R_+^{N-k}$ by $\R^{N-k}$. Hence there exists some $\alpha>0$ such that $\int_J \int^\infty_u I(t,x)dxdt = o(e^{-\alpha u^2 - u^2/2})$ as $u \to \infty$. Plugging this, together with \eqref{Eq:mean critical points 2} and \eqref{Eq:mean critical points 3}, into \eqref{Eq:mean-mu}, we see that $\E \{M_u^E (J)\}$ becomes
\begin{equation*}
\begin{split}
&(-1)^k \int_J  {\rm det}(\La_J(t))dt\int^\infty_u dx\int_0^\infty dy_{k+1} \cdots \int _0^\infty dy_{N}\\
&\E\{\text{det} (Q_t\nabla^2 X_{|J}(t)Q_t) \mathbbm{1}_{\{\nabla^2 X_{|J}(t) \in \mathcal{D}_k\}} | X(t)=x, X_{k+1}(t)=y_{k+1}, \cdots, X_{N}=y_{N}, \nabla X_{|J}(t)=0 \} \\
&\times p_{X(t), X_{k+1}(t), \cdots, X_{N}(t)}(x, y_{k+1}, \cdots, y_{N}| \nabla X_{|J}(t)=0) p_{\nabla X_{|J}(t)}(0)  dt.\\
&\quad = (-1)^k \bigg[\int_J dt\int^\infty_u dx\int_0^\infty dy_{k+1} \cdots \int _0^\infty dy_{N}\\
&\qquad \E\{\text{det} \nabla^2 X_{|J}(t) | X(t)=x, X_{k+1}(t)=y_{k+1}, \cdots, X_{N}=y_{N}, \nabla X_{|J}(t)=0 \} \\
&\qquad \times p_{X(t), X_{k+1}(t), \cdots, X_{N}(t)}(x, y_{k+1}, \cdots, y_{N}| \nabla X_{|J}(t)=0) p_{\nabla X_{|J}(t)}(0) \bigg] + o(e^{-\alpha u^2 - u^2/2})\\
&\quad = \E \left\{(-1)^k\sum^k_{i=0} (-1)^i \mu_i(J)\right\} (1+o(e^{-\alpha u^2})),
\end{split}
\end{equation*}
where the last line is due to the Kac-Rice metatheorem and the fact that
$$
\sum^k_{i=0}(-1)^i|\text{det} \nabla^2 X_{|J}(t)|\mathbbm{1}_{\{\nabla^2 X_{|J}(t) \in \mathcal{D}_i\}}= {\rm det} \nabla^2 X_{|J}(t), \quad {\rm a.s.}
$$
\end{proof}

Lemma \ref{Lem:Piterbarg} below can be derived from Lemma 4 in Piterbarg (1996b). It will be used to show in Corollary \ref{Cor:Piterbarg} that the factorial moments of $M_u ^E(J)$ are usually super-exponentially small.
\begin{lemma} \label{Lem:Piterbarg}
Let $X = \{X(t): t\in T \}$ be a Gaussian random field satisfying $({\bf H}1)$ and $({\bf H}2)$. Then for any $\ep >0$, there exists $\varepsilon_1 >0$ such that for any $J\in \partial_k T$ with $k\ge 1$ and $u$ large enough,
\begin{equation*}
\begin{split}
\E \{M_u^E (J)(M_u^E(J)- 1)\} \leq e^{-u^2/(2\beta_J^2 +\varepsilon)} + e^{-u^2/(2 -\ep_1)},
\end{split}
\end{equation*}
where $\beta_J^2 = \sup_{t\in J} \sup_{e\in \mathbb{S}^{k-1}} {\rm Var} (X(t)|\nabla X_{|J}(t), \nabla^2 X_{|J}(t)e)$. Here and in the sequel, $\mathbb{S}^{k-1}$
 is the unit sphere in $\R^{k}$ .
\end{lemma}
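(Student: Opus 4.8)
The plan is to compute the second factorial moment via the Kac–Rice formula for pairs and then bound the resulting integral by splitting the domain $J\times J$ according to whether the two points $t,s$ are close together or far apart. First I would write, using the Kac–Rice metatheorem for the expected number of ordered pairs of critical points (Theorem 11.5.1 in Adler and Taylor (2007) applied to the field $\nabla X_{|J}$ restricted to $J$),
\begin{equation*}
\begin{split}
\E\{M_u^E(J)(M_u^E(J)-1)\}
&\le \int_{J}\int_{J} \E\big\{ |\det\nabla^2 X_{|J}(t)|\,|\det\nabla^2 X_{|J}(s)|\,\\
&\quad \mathbbm{1}_{\{X(t)\ge u,\,X(s)\ge u\}}\ \big|\ \nabla X_{|J}(t)=\nabla X_{|J}(s)=0\big\}\\
&\quad \times p_{(\nabla X_{|J}(t),\nabla X_{|J}(s))}(0,0)\,dt\,ds,
\end{split}
\end{equation*}
where I drop the index and outward constraints to obtain an upper bound. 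Because $X$ has unit variance and satisfies $(\textbf{H}1)$, the conditional mean of $X(t)$ given the gradients vanish is bounded and the determinant factors have at most polynomial growth in $u$; the Gaussian density of the pair $(X(t),X(s))$ conditioned on the vanishing gradients controls the exponential decay. The key quantity is therefore the conditional variance structure of $(X(t),X(s))$ given $\nabla X_{|J}(t)=\nabla X_{|J}(s)=0$, since the constraint $X(t)\ge u,\ X(s)\ge u$ forces the exponent to be governed by the largest conditional variance of a suitable linear combination.

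For the far region $\{(t,s): d(t,s)\ge \delta\}$ I would invoke condition $(\textbf{H}2)$, which guarantees non-degeneracy of the full $2$-point Gaussian vector uniformly on the compact set $\{d(t,s)\ge\delta\}$; a standard estimate (this is exactly the content of Lemma \ref{Lem:Piterbarg}'s cited source, Lemma 4 in Piterbarg (1996b)) then shows the conditional covariance of $(X(t),X(s))$ stays uniformly below the identity, so that $\P\{X(t)\ge u,\,X(s)\ge u\mid\cdots\}$ decays like $e^{-u^2/(2-\ep_1)}$ for some $\ep_1>0$; multiplying by the polynomially-bounded determinant factors preserves this rate, giving the second term in the bound.

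The main obstacle, and the heart of the argument, is the near-diagonal region $d(t,s)<\delta$, where the two-point vector degenerates as $s\to t$ and the clean uniform bound from $(\textbf{H}2)$ fails. Here I would Taylor-expand $X(s)$ around $t$ using the conditioning $\nabla X_{|J}(t)=0$, writing $X(s)-X(t)$ in terms of the second-order increment, so that on the event that both gradients vanish the difference $X(s)-X(t)$ is of order $d(t,s)^2$ times a bounded Gaussian. The conditional variance of $X(t)$ given both gradients, as $s\to t$, tends to $\mathrm{Var}(X(t)\mid \nabla X_{|J}(t),\nabla^2 X_{|J}(t)e)$ for the direction $e=(s-t)/|s-t|$, which is precisely $\beta_J^2$ after taking the supremum over $t\in J$ and $e\in\mathbb{S}^{k-1}$. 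Combined with the Hölder continuity from $(\textbf{H}1)$ to guarantee uniformity of the expansion, this yields the decay rate $e^{-u^2/(2\beta_J^2+\ep)}$ for the near-diagonal contribution, which is the first term. Assembling the two regions gives the stated bound; making the degenerate near-diagonal estimate uniform in $t$ and rigorously identifying the limiting conditional variance with $\beta_J^2$ is where the real work lies, and I would lean directly on Piterbarg (1996b, Lemma 4) to supply this technical estimate.
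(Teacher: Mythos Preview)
Your proposal is correct and matches the paper's approach: the paper gives no proof at all for this lemma, simply stating that it ``can be derived from Lemma 4 in Piterbarg (1996b),'' which is exactly the source you invoke for the technical near-diagonal estimate. Your sketch of the Kac--Rice representation for pairs, the near/far split, and the identification of the limiting conditional variance with $\beta_J^2$ is precisely the content of that cited lemma, so you have in fact supplied more detail than the paper itself.
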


\begin{corollary}\label{Cor:Piterbarg}
Let $X = \{X(t): t\in T \}$ be a Gaussian random field satisfying $({\bf H}1)$ and $({\bf H}2)$. Then for all $J\in \partial_k T$, $\E \{M_u^E (J)(M_u^E (J) -1) \}$ are super-exponentially small.
\end{corollary}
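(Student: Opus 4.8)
The plan is to deduce the corollary directly from Lemma \ref{Lem:Piterbarg}, once I establish that the conditional-variance constant $\beta_J^2$ is \emph{strictly} less than $1$ for every face $J$ with $k\ge 1$, together with a trivial observation for the zero-dimensional faces. Recall that a function is super-exponentially small precisely when it is $o(e^{-\alpha u^2 - u^2/2})$ for some $\alpha>0$, i.e. when its exponential rate beats $u^2/2$ by a fixed positive margin. Lemma \ref{Lem:Piterbarg} bounds $\E\{M_u^E(J)(M_u^E(J)-1)\}$ by $e^{-u^2/(2\beta_J^2+\ep)}+e^{-u^2/(2-\ep_1)}$; the second summand already has rate $1/(2-\ep_1)>1/2$, so it is automatically super-exponentially small, and everything hinges on pushing the first rate $1/(2\beta_J^2+\ep)$ above $1/2$ as well, which is possible (for suitably small $\ep$) exactly when $\beta_J^2<1$.

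The heart of the argument is therefore the uniform strict bound $\beta_J^2<1$. Here I would exploit that $X$ has unit variance together with the relation ${\rm Cov}(X(t),\nabla^2 X_{|J}(t))=-\La_J(t)$ noted above. For a fixed direction $e\in\mathbb{S}^{k-1}$ the scalar $W_{t,e}:=e^T\nabla^2 X_{|J}(t)e$ is a linear function of the conditioning vector $\nabla^2 X_{|J}(t)e$, so conditioning on the full pair $(\nabla X_{|J}(t),\nabla^2 X_{|J}(t)e)$ can only reduce the variance further than conditioning on $W_{t,e}$ alone:
\[
{\rm Var}(X(t)\mid \nabla X_{|J}(t),\nabla^2 X_{|J}(t)e)\le {\rm Var}(X(t)\mid W_{t,e})=1-\frac{(e^T\La_J(t)e)^2}{{\rm Var}(W_{t,e})},
\]
since ${\rm Cov}(X(t),W_{t,e})=-e^T\La_J(t)e$. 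Because $\La_J(t)$ is continuous and positive definite on the compact closure $\overline J\subset T$ (nondegeneracy from $({\bf H}2')$, a subvector of the vector in $({\bf H}2)$), its smallest eigenvalue is bounded below by some $c_0>0$, giving $e^T\La_J(t)e\ge c_0$ for all unit $e$; likewise ${\rm Var}(W_{t,e})\le C_0$ uniformly on $\overline J\times\mathbb{S}^{k-1}$ by continuity and compactness. Hence $\beta_J^2\le 1-c_0^2/C_0<1$, uniformly in $(t,e)$, which is exactly the needed strict bound.

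With $\beta_J^2\le 1-\delta$ for some fixed $\delta>0$ in hand, I would choose $\ep<\delta$ in Lemma \ref{Lem:Piterbarg}, so that $1/(2\beta_J^2+\ep)\ge 1/(2-\delta)>1/2$; both exponential terms then decay at a rate exceeding $u^2/2$ by a fixed positive margin, so their sum, and hence $\E\{M_u^E(J)(M_u^E(J)-1)\}$, is super-exponentially small for every $k\ge 1$. It remains to treat the zero-dimensional faces $J\in\partial_0 T$, but these are single points, so $M_u^E(J)$ takes only the values $0$ and $1$; thus $M_u^E(J)(M_u^E(J)-1)\equiv 0$ and its expectation vanishes identically. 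I expect the only delicate point to be the uniform strict inequality $\beta_J^2<1$: the pointwise strictness is immediate from $\La_J(t)$ being positive definite, and the single-scalar conditioning trick through $W_{t,e}$ is what upgrades this to a uniform bound while sidestepping any worry about the conditioning vector $\nabla^2 X_{|J}(t)e$ degenerating for special directions $e$.
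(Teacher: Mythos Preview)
Your proof is correct and follows essentially the same route as the paper: handle $k=0$ trivially, and for $k\ge 1$ reduce via Lemma~\ref{Lem:Piterbarg} to the strict inequality $\beta_J^2<1$, which you obtain from ${\rm Cov}(X(t),\nabla^2 X_{|J}(t))=-\La_J(t)$, the positive definiteness of $\La_J(t)$, and compactness. The only tactical difference is that the paper conditions on the full vector $\nabla^2 X_{|J}(t)e$, shows pointwise strictness via the implication ${\rm Var}(X(t)\mid\cdot)=1\Rightarrow \La_J(t)e=0$, and then invokes continuity on the compact set $\bar J\times\mathbb{S}^{k-1}$; your reduction to the single scalar $W_{t,e}=e^T\nabla^2 X_{|J}(t)e$ yields the explicit bound $1-c_0^2/C_0$ directly and, as you note, bypasses any concern about degeneracy of the conditioning vector (though in fact $({\bf H}2)$ rules that out as well).
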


\begin{proof}\ If $k=0$, then $M_u^E (J)$ is either 0 or 1 and hence $\E \{M_u^E (J)(M_u^E (J) -1) \}=0$. If $k\geq 1$, then, thanks to Lemma \ref{Lem:Piterbarg}, it suffices to show $\beta_J^2<1$. Clearly, for every $e \in \mathbb{S}^{k-1}$ and $t \in T$, $\text{Var} (X(t)|\nabla X_{|J}(t), \nabla^2 X_{|J}(t)e) \leq 1$. On the other hand,
\begin{equation} \label{Eq:contra}
\begin{split}
&\quad \ \text{Var} (X(t)|\nabla X_{|J}(t), \nabla^2 X_{|J}(t)e) = 1 \ \Longrightarrow\ {\rm Cov} (X(t), \nabla^2 X_{|J}(t)e)=0.
\end{split}
\end{equation}
Note that  the right hand side of (\ref{Eq:contra}) is equivalent to $\La_J(t)e=0$. However, by ({\bf H}2), $\La_J(t)$ is positive definite, which implies $\La_J(t)e\neq 0$ for all $e \in \mathbb{S}^{k-1}$.  Thus for every $e \in \mathbb{S}^{k-1}$ and $t \in T$, $\text{Var} (X(t)|\nabla X_{|J}(t), \nabla^2 X_{|J}(t)e) < 1$. Combining this with the continuity of $\text{Var} (X(t)|\nabla X_{|J}(t), \nabla^2 X_{|J}(t)e)$
in $(e, t)$, we conclude $\beta_J^2 < 1$.
\end{proof}

By similar arguments for showing Lemma 4.5 in Cheng and Xiao (2014b), one can easily obtain that the cross terms $\E \{M_u^E (J) M_u^E (J') \}$ in (\ref{Ineq:bounds}) are super-exponentially small if $J$ and $J'$ are not adjacent. In particular, as the main step therein, Eq. (4.13) is essentially not affected by the mean function of the field. We thus have the following result.
\begin{lemma}\label{Lem:disjoint faces}
Let $X = \{X(t): t\in T \}$ be a Gaussian random field satisfying $({\bf H}1)$ and $({\bf H}2)$. Let $J$ and $J'$ be two faces of $T$ such that their distance is positive, i.e., $\inf_{t\in J, s\in J'}\|s-t\|>\delta_0$ for some $\delta_0>0$. Then $\E \{M_u^E (J) M_u^E (J') \}$ is super-exponentially small.
\end{lemma}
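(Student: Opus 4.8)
The plan is to mirror the Kac--Rice argument used above for the single-face moments, but now applied to the product $M_u^E(J)M_u^E(J')$ over the product domain $J\times J'$. Because $\|t-s\|>\delta_0$ for all $t\in J$, $s\in J'$, condition (\textbf{H}2) guarantees that for every such pair the Gaussian vector built from $(X(t),\nabla X(t),X_{ij}(t))$ and $(X(s),\nabla X(s),X_{ij}(s))$ is non-degenerate; this is exactly what licenses the Kac--Rice metatheorem for the joint $(k+k')$-dimensional gradient system. First I would write
\[
\E\{M_u^E(J)M_u^E(J')\}=\int_J\!\int_{J'}\E\big\{|\det\nabla^2X_{|J}(t)|\,|\det\nabla^2X_{|J'}(s)|\,\mathbbm{1}_{\{X(t)\ge u,\,X(s)\ge u,\,\cdots\}}\,\big|\,\nabla X_{|J}(t)=0,\nabla X_{|J'}(s)=0\big\}\,p(0,0)\,ds\,dt,
\]
where the suppressed indicator encodes the index and outward-sign constraints and $p(0,0)$ denotes the joint density of the two gradients at the origin.

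Next I would strip off the level constraint by conditioning further on $X(t)=x$, $X(s)=y$ and integrating $x,y$ over $[u,\infty)$. The conditional mean of each Hessian entry is affine in $(x,y)$ with coefficients continuous in $(t,s)$, so on the compact closure $\bar J\times\bar J'$ the conditional expectation of the product of the two determinants is dominated by a single polynomial $P(x,y)$, uniformly in $(t,s)$; here the everywhere non-degeneracy from (\textbf{H}2) together with the positive gap $\delta_0$ is what keeps the conditional covariance and its inverse uniformly bounded. The whole integrand is then controlled by $P(x,y)$ times the conditional joint density of $(X(t),X(s))$ at $(x,y)$ and the bounded gradient density at $0$, and the task reduces to bounding
\[
\int_u^\infty\!\int_u^\infty P(x,y)\,p_{X(t),X(s)}\big(x,y\mid\nabla X_{|J}(t)=0,\nabla X_{|J'}(s)=0\big)\,dx\,dy .
\]

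The decisive point is the exponential rate of this double integral, and this is exactly where the mean function drops out of the leading order, in parallel with Eq. (4.13) of Cheng and Xiao (2014b). Writing $X=m+\bar X$ with $\bar X$ centered and unit-variance, the conditional law of $(X(t),X(s))$ given the two gradients is bivariate Gaussian with means of order $O(1)$ (bounded, since $m$ is continuous on the compact $T$), conditional variances at most $1$, and some conditional correlation $\tilde\rho(t,s)$. The tail of such a density over $\{x\ge u,\,y\ge u\}$ decays like $\exp(-u^2/(1+\tilde\rho(t,s))+O(u))$, whose leading quadratic coefficient strictly exceeds $1/2$ as soon as $\tilde\rho(t,s)<1$; the bounded mean perturbs only the $O(u)$ term and cannot spoil super-exponential smallness.

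Thus the argument comes down to the uniform separation $\sup_{t\in J,\,s\in J'}\tilde\rho(t,s)<1$, which I expect to be the main obstacle. Here I would argue that $\tilde\rho(t,s)=1$ would force a linear relation among $X(t),\nabla X_{|J}(t),X(s),\nabla X_{|J'}(s)$, contradicting the non-degeneracy in (\textbf{H}2); hence $\tilde\rho(t,s)<1$ for every pair with $t\ne s$. Since $\|t-s\|\ge\delta_0$ keeps $J\times J'$ away from the diagonal and $\tilde\rho$ is continuous on the compact set $\bar J\times\bar J'$, the supremum is attained and lies strictly below $1$. Choosing $\alpha>0$ with $1/(1+\sup\tilde\rho)\ge 1/2+\alpha$ then gives $\E\{M_u^E(J)M_u^E(J')\}=o(e^{-\alpha u^2-u^2/2})$, i.e. the claimed super-exponential smallness. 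The only delicate bookkeeping is ensuring that every estimate, the polynomial $P$, the conditional covariance, and the correlation $\tilde\rho$, is genuinely uniform over $J\times J'$, which compactness together with (\textbf{H}2) and the gap $\delta_0$ provides.
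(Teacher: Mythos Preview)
Your proposal is correct and follows essentially the same approach the paper invokes: the paper does not give a self-contained proof here but refers to Lemma~4.5 in Cheng and Xiao (2014b), noting that the key step (their Eq.~(4.13), the bivariate tail bound) is unaffected by the mean function. Your sketch---Kac--Rice on $J\times J'$, polynomial control of the conditional Hessian determinants, and the decisive observation that the conditional correlation of $(X(t),X(s))$ stays uniformly below $1$ on $\bar J\times\bar J'$ because of $({\bf H}2)$ and the gap $\delta_0$---is exactly that argument spelled out, including the point that the bounded mean only shifts the exponent by $O(u)$.
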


Next we turn to the alternative case when $J$ and $J'$ are adjacent. In such case, it is more technical to prove that $\E \{M_u^E (J) M_u^E (J') \}$ is super-exponentially small. To shorten the arguments for deriving Lemma \ref{Lem:adjacent faces} below, we will quote certain similar results in the proof of Theorem 4.8 in Cheng and Xiao (2014b) [or Theorem 4 in Aza\"is and Delmas (2002)].
\begin{lemma}\label{Lem:adjacent faces}
Let $X = \{X(t): t\in T \}$ be a Gaussian random field satisfying $({\bf H}1)$ and $({\bf H}2)$. Let $J$ and $J'$ be two faces of $T$ such that they are adjacent, i.e., $\inf_{t\in J, s\in J'}\|s-t\|=0$. Then $\E \{M_u^E (J) M_u^E (J') \}$ is super-exponentially small.
\end{lemma}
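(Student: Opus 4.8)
The plan is to represent the cross-moment $\E\{M_u^E(J)M_u^E(J')\}$ as a Kac--Rice double integral over $J\times J'$ and then control it by splitting the domain according to the distance $\|t-s\|$. Suppose $J\in\partial_k T$ and $J'\in\partial_{k'}T$ with $k\ge k'$. By the Kac--Rice metatheorem applied to the pair of point processes (as in \eqref{Eq:mean-mu}, but now at two points), $\E\{M_u^E(J)M_u^E(J')\}$ equals
\begin{equation*}
\int_J\!\int_{J'}\E\Big\{|\det\nabla^2X_{|J}(t)|\,|\det\nabla^2X_{|J'}(s)|\,\mathbbm{1}_{B(t,s)}\,\Big|\,\nabla X_{|J}(t)=0,\ \nabla X_{|J'}(s)=0\Big\}\,p_{\nabla X_{|J}(t),\nabla X_{|J'}(s)}(0,0)\,ds\,dt,
\end{equation*}
where $B(t,s)$ collects the index, extended-outward, and level events $\{X(t)\ge u,\ X(s)\ge u\}$. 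I fix a small $\de>0$ and decompose $J\times J'$ into the far region $\{\|t-s\|>\de\}$ and the near region $\{\|t-s\|\le\de\}$; because $J$ and $J'$ are adjacent, the near region is nonempty and abuts the shared boundary, and it is there that $X(t)$ and $X(s)$ become highly correlated.

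On the far region the argument is identical to that of Lemma \ref{Lem:disjoint faces}: the pair $(t,s)$ ranges over a compact set on which $\|t-s\|\ge\de$, so by $({\bf H}2)$ and continuity the conditional correlation of $X(t)$ and $X(s)$ given the two in-face gradients is bounded by some $\rho_\de<1$, and the Gaussian density of $\{X(t)\ge u,\ X(s)\ge u\}$ is $O(\exp(-u^2/(1+\rho_\de)))$. Since $\nabla^2X_{|J}-xI$ and the deterministic term $Q_t\nabla^2m_{|J}Q_t$ are bounded on the compact $T$, the determinant factors contribute only a polynomial in $u$, exactly as in the estimate of $I(t,x)$ in the proof of Lemma \ref{Lem:approximateMuE}; hence this part is super-exponentially small. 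This is Eq.~(4.13) of Cheng and Xiao (2014b), which, as noted above, is unaffected by the mean function.

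The crux is the near region $\{\|t-s\|\le\de\}$, where $(X(t),X(s))$ degenerates as $s\to t$ and the bound $\rho_\de<1$ is no longer available. Here I would follow the device used in the proof of Theorem 4.8 of Cheng and Xiao (2014b) and of Theorem 4 in Aza\"is and Delmas (2002). Since $J$ and $J'$ are adjacent, $\ol J\cap\ol{J'}$ is a common face of lower dimension; I reparametrize by $s$ together with the increment $w=t-s$ and expand $X(t)$ and $\nabla X_{|J}(t)$ to second order about $s$. The vanishing conditions $\nabla X_{|J}(t)=0$ and $\nabla X_{|J'}(s)=0$, combined with the extended-outward sign constraints in the normal directions, then recast the joint event in terms of a Gaussian vector built from the values, gradients, and second derivatives at $s$, on which one conditions $X(t)$. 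By $({\bf H}2)$ this vector is nondegenerate in the coincidence limit $\|w\|\to0$, and the conditional variance of $X(t)$ stays strictly below $1$ uniformly --- the exact analogue of the inequality $\beta_J^2<1$ established in Corollary \ref{Cor:Piterbarg}. Consequently the joint-exceedance density is again $O(\exp(-u^2/(1+\rho)))$ for some $\rho<1$, while the determinant and mean terms remain polynomially bounded and the Jacobian integration over the small increment $w$ costs at most a further polynomial factor. Combining the two regions yields the claimed super-exponential bound.

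The main obstacle is precisely this near-diagonal analysis: one must check that, after removing the shared derivative constraints along the common boundary, the Gaussian vector remains nondegenerate in the coincidence limit and the conditional variance of $X(t)$ is bounded away from $1$, so that the singularity of the bivariate density of $(X(t),X(s))$ is absorbed by conditioning on derivatives rather than forcing correlation $1$. The nonzero mean enters only in two harmless ways: it shifts the exceedance level from $u$ to $u-m(\cdot)$, altering the Gaussian exponent by at most a bounded multiple of $u$ that is dominated by the strict inequality $\rho<1$; and it appears through the bounded, continuous terms $Q_t\nabla^2m_{|J}(t)Q_t$ and $Q_s\nabla^2m_{|J'}(s)Q_s$ inside the determinants, contributing at most polynomial growth in $u$. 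This is why the estimates of Cheng and Xiao (2014b) and Aza\"is and Delmas (2002), originally derived for centered fields, transfer up to these harmless polynomial and $e^{O(u)}$ factors.
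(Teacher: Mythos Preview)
Your outline --- Kac--Rice double integral, far/near split, Taylor expansion near the shared boundary, and the citation of Theorem 4.8 in Cheng and Xiao (2014b) --- matches the paper's route. But your near-region sketch misidentifies the mechanism: you rest the argument on ``the conditional variance of $X(t)$ stays strictly below $1$ uniformly --- the exact analogue of $\beta_J^2<1$,'' and that is not what carries the estimate.

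The paper does not merely bound a conditional variance. Writing $\sigma(J)=\{1,\ldots,k\}$, $\sigma(J')=\{1,\ldots,l,k+1,\ldots,k+k'-l\}$, it first covers $J\times J'$ by sets $D_0\cup\bigcup_{i=l+1}^{k+k'-l}D_i$, where on $D_i$ the directional coefficient $\alpha_i(t,s)=\l e_i,\Lambda(t)e_{t,s}\r$ has a definite sign bounded away from zero (and on $D_0$ a tangential combination does). On $D_i$ with $l+1\le i\le k$ one then conditions not on generic second derivatives but specifically on $X_i(s)=w_i$ and $X_i(t)=0$; the Taylor expansion gives $X_i(s)\approx\|s-t\|\l\nabla X_i(t),e_{t,s}\r$, and the conditional mean of $X(t)$ becomes (paper's \eqref{Eq:xi2}) roughly $m(t)-\alpha_i(t,s)\,w_i/\|s-t\|$. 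The extended-outward constraint forces $w_i\ge 0$, so this conditional mean is bounded above \emph{only because} $\alpha_i\ge\beta_i>0$ on $D_i$. That, together with $\sigma_2^2\le 1-\delta_0$, yields both the super-exponential decay in $u$ and the integrability in $w=w_i/\|s-t\|$ via the factor $\exp\{-\beta_i^2C_6^2w^2/(2(1-\delta_1))\}$; the residual singularity $\|s-t\|^{-l}$ from the degenerating gradient density is then integrable over $J\times J'$.

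Your analogy with Corollary \ref{Cor:Piterbarg} is misleading: there $t=s$ and one is free to condition on $\nabla^2X_{|J}(t)e$ for \emph{every} unit vector $e$, so the supremum over $e$ is still below $1$. Here the direction $e_{t,s}$ is dictated by the geometry, and for a fixed coordinate $i$ the sign of $\alpha_i(t,s)$ may flip across $J\times J'$, destroying the mean bound; this is exactly why the directional cover $\{D_i\}$ is needed and why the extended-outward sign constraints are not a cosmetic addition but the source of integrability. Your remarks about the mean function entering only through bounded shifts in the exponent and in $Q_t\nabla^2 m_{|J}(t)Q_t$ are correct once this cover is in place.
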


\begin{proof}\,  Let $I:= \bar{J}\cap \bar{J'} \neq \emptyset$. Without loss of generality, we assume
\begin{equation}\label{Eq:assumption for faces}
\begin{split}
\sigma(J)= \{1, \ldots, l, l+1, \ldots, k\},\ \
\sigma(J')= \{1, \ldots, l, k+1, \ldots, k+k'-l\},
\end{split}
\end{equation}
where $0 \leq l \leq k \leq k' \leq N$ and $k'\geq 1$. Recall that, if $k=0$, then $\sigma(J)
= \emptyset$. Under assumption (\ref{Eq:assumption for faces}), we have $J\in \partial_k T$, $J'\in \partial_{k'} T$
and $\text{dim} (I) =l$. Assume also that all elements in $\ep(J)$ and $\ep(J')$ are 1, which implies $E(J)=\R^{N-k}_+$ and $E(J')=\R^{N-k'}_+$.

We first consider the case $k\geq 1$. By the Kac-Rice metatheorem, $\E \{M_u^E (J) M_u^E (J') \}$ is bounded from above by
\begin{equation}\label{Eq:cross term}
\begin{split}
&\int_{J} dt\int_{J'} ds \int_u^\infty dx \int_u^\infty dy  \int_0^\infty dz_{k+1} \cdots \int_0^\infty dz_{k+k'-l} \int_0^\infty dw_{l+1} \cdots \int_0^\infty dw_{k}\\
& \quad \E \big\{ |\text{det} \nabla^2 X_{|J}(t) ||\text{det} \nabla^2 X_{|J'}(s) | \big| X(t)=x, X(s)=y,\nabla X_{|J}(t)=0,  X_{k+1}(t)=z_{k+1},  \\
& \qquad  \ldots, X_{k+k'-l}(t)=z_{k+k'-l}, \nabla X_{|J'}(s)=0, X_{l+1}(s)=w_{l+1}, \ldots, X_{k}(s)=w_{k} \big\}\\
& \quad \times p_{t,s}(x,y,0, z_{k+1}, \ldots,z_{k+k'-l}, 0, w_{l+1}, \ldots, w_{k} )\\
&:= \int \int_{J\times J'} A(t,s)\,dtds,
\end{split}
\end{equation}
where $p_{t,s}(x,y,0, z_{k+1}, \ldots,z_{k+k'-l}, 0,w_{l+1}, \ldots, w_{k} )$ is the density of
$$(X(t),X(s),\nabla X_{|J}(t), X_{k+1}(t),\ldots, X_{k+k'-l}(t), \nabla X_{|J'}(s), X_{l+1}(s), \ldots, X_{k}(s))$$
evaluated at $(x,y,0, z_{k+1}, \ldots,z_{k+k'-l}, 0,w_{l+1}, \ldots, w_{k} )$.

Let $\{e_1, e_2, \ldots, e_N\}$ be the standard orthonormal basis of $\R^N$. For $t\in J$ and $s\in J'$, let $e_{t, s}=(s-t)^T/\|s-t\|$ and let $\alpha_i(t, s)= \l e_i, \La(t)e_{t,s}\r$. Then
\begin{equation*}
\La(t)e_{t,s}=\sum_{i=1}^N \l e_i, \La(t)e_{t,s}\r e_i = \sum_{i=1}^N  \alpha_i(t, s) e_i.
\end{equation*}
Since $\La(t)$ are uniformly positive definite for all $t\in T$, there exists some $\alpha_0 >0$ such that
\begin{equation*}
\l e_{t,s}, \La(t)e_{t,s} \r \geq \alpha_0
\end{equation*}
for all $t$ and $s$. Let
\begin{equation*}
\begin{split}
D_i &= \{ (t,s)\in J\times J': \alpha_i (t,s)\geq \beta_i \}, \quad \text{if}\ l+1 \leq i \leq k,\\
D_i &= \{ (t,s)\in J\times J': \alpha_i (t,s)\leq -\beta_i \}, \quad \text{if}\ k+1\leq i \leq k+k'-l,\\
D_0 &= \bigg\{ (t,s)\in J\times J': \sum_{i=1}^l \alpha_i (t,s)\l e_i , e_{t,s}\r\geq \beta_0\bigg \},
\end{split}
\end{equation*}
where $\beta_0, \beta_1,\ldots, \beta_{k+k'-l}$ are positive constants such that $\beta_0 + \sum_{i=l+1}^{k+k'-l} \beta_i < \alpha_0$. Similarly to the proof of Theorem 4.8 in Cheng and Xiao (2014b), $D_0\cup \cup_{i=l+1}^{k+k'-l} D_i$ is a covering of $J\times J'$. By (\ref{Eq:cross term}),
\begin{equation*}
\E \{M_u^E (J) M_u^E (J') \} \leq \int \int_{D_0} A(t,s)\,dtds + \sum_{i=l+1}^{k+k'-l} \int \int_{D_i} A(t,s)\,dtds.
\end{equation*}

It can be shown similarly to the proof of Theorem 4.8 in Cheng and Xiao (2014b) that $\int \int_{D_0} A(t,s)\,dtds$ is super-exponentially small. Next we show that $\int \int_{D_i} A(t,s)\,dtds$ is super-exponentially small for $i= l+1, \ldots, k$. 

It follows from (\ref{Eq:cross term}) that $\int \int_{D_i} A(t,s)\,dtds$ is bounded above by
\begin{equation}\label{Eq:integral on Di}
\begin{split}
&\int \int_{D_i} dtds \int_u^\infty dx \int_0^\infty dw_i \, p_{X(t), \nabla X_{|J}(t), X_i(s), \nabla X_{|J'} (s)}(x,0,w_i,0)\\
&\times \E \{ |\text{det} \nabla^2 X_{|J}(t)| |\text{det} \nabla^2 X_{|J'}(s)|  | X(t)=x, \nabla X_{|J}(t)=0 , X_i(s)=w_i, \nabla X_{|J'} (s)=0 \}.
\end{split}
\end{equation}
Notice that if a subset $B \subset D_i$ satisfies $\inf_{t\in B\cap J,\, s\in B\cap J'}\|s-t\|>\eta_0$ for some $\eta_0>0$, then similarly to Lemma \ref{Lem:disjoint faces}, $\int \int_B A(t,s)\,dtds$ is super-exponentially small. Therefore, in the arguments below, we only treat the case when $t$ and $s$ are close enough or $\|t-s\| \to 0$.

There exists some positive constant $C_1$ such that
\begin{equation}\label{Eq:integral on Di 2}
\begin{split}
&p_{X(t), \nabla X_{|J}(t), X_i(s), \nabla X_{|J'} (s)}(x,0,w_i,0)\\
&\quad = p_{\nabla X_{|J'} (s), X_1(t),\ldots,X_{i-1}(t), X_{i+1}(t),\ldots, X_k(t)}(0|X(t)=x, X_i(s)=w_i,X_i(t)=0)\\
&\quad \quad \times p_{X(t)}(x|X_i(s)=w_i, X_i(t)=0)p_{X_i(s)}(w_i|X_i(t)=0)p_{X_i(t)}(0)\\
&\quad \leq C_1(\text {detCov} (X(t), \nabla X_{|J}(t), X_i(s), \nabla X_{|J'} (s)))^{-1/2}\\
&\qquad \times \exp \bigg\{-\frac{(x-\xi_2(t,s))^2}{2\sigma_2^2(t,s)}\bigg\}\exp \bigg\{-\frac{(w_i-\xi_1(t,s))^2}{2\sigma_1^2(t,s)}\bigg\},
\end{split}
\end{equation}
where
\begin{equation*}
\begin{split}
\xi_1(t,s) &= \E\{X_i(s) | X_i(t)=0\} = m_i(s),\\
\sigma_1^2(t,s) &= {\rm Var}(X_i(s) | X_i(t)=0) = \frac{{\rm det Cov} (X_i(s), X_i(t))}{\la_{ii}(t)},\\
\xi_2(t,s) &= \E\{ X(t) | X_i(s)=w_i, X_i(t)=0\},\\
\sigma_2^2(t,s) &= {\rm Var}( X(t) | X_i(s)=w_i, X_i(t)=0).
\end{split}
\end{equation*}
In particular, applying Taylor's formula to $X_i(s)$ [see Eq. (4.23) in Cheng and Xiao (2014b) or Piterbarg (1996b)], one has
\begin{equation}\label{Eq:xi2}
\begin{split}
\xi_2(t,s) &= \E\{ X(t) | \l\nabla X_i(t), e_{t,s}\r =w_i/\|s-t\| +o(1), X_i(t)=0\},\\
&= m(t) + ({\rm Cov}(X(t), \l\nabla X_i(t), e_{t,s}\r), 0)\left(
            \begin{array}{cc}
              \frac{1}{{\rm Var}(\l\nabla X_i(t), e_{t,s}\r)} & 0 \\
              0 & \frac{1}{\la_{ii}(t)}\\
            \end{array}
          \right)\\
          &\quad \cdot \left(
                   \begin{array}{c}
                     w_i/\|s-t\| +o(1) -  \l\nabla m_i(t), e_{t,s}\r\\
                     -m_i(t) \\
                   \end{array}
                 \right)\\
&= m(t) - \frac{\alpha_i(t,s)[w_i/\|s-t\| -  \l\nabla m_i(t), e_{t,s}\r+o(1)]}{{\rm Var}(\l\nabla X_i(t), e_{t,s}\r)}
\end{split}
\end{equation}
and
\begin{equation}\label{Eq:sigma2}
\begin{split}
\sigma_2^2(t,s) = {\rm Var}( X(t) | \l\nabla X_i(t), e_{t,s}\r, X_i(t)) + o(1) \le 1-\delta_0
\end{split}
\end{equation}
for some $\delta_0>0$.

Also, by similar arguments in the proof of Theorem 4.8 in Cheng and Xiao (2014b), there exist positive constants $C_2, C_3, N_1$ and $N_2$ such that
\begin{equation}\label{Eq:integral on Di 3}
{\rm det Cov} (\nabla X_{|J} (t), X_i(s), \nabla X_{|J'} (s)) \geq C_2 \|s-t\|^{2(l+1)}
\end{equation}
and
\begin{equation}\label{Eq:integral on Di 4}
\begin{split}
\E &\left\{ |{\rm det} \nabla^2 X_{|J} (t) | |{\rm det} \nabla^2 X_{|J'}(s)| \big| X(t)=x, \nabla X_{|J}(t)=0 , X_i(s)=w_i, \nabla X_{|J'} (s)=0 \right\}\\
&=\E \big\{ |{\rm det} \nabla^2 X_{|J} (t) | |{\rm det} \nabla^2 X_{|J'}(s)| \big| X(t)=x, \nabla X_{|J}(t)=0 , \\
&\qquad \ \l\nabla X_i(t), e_{t,s}\r =w_i/\|s-t\| +o(1), \nabla X_{|J'} (s)=0 \big\}\\
&\leq C_3(x^{N_1}+|w_i/\|s-t\||^{N_2} +1 ).
\end{split}
\end{equation}
Combining (\ref{Eq:integral on Di}), \eqref{Eq:integral on Di 2}, \eqref{Eq:xi2}, \eqref{Eq:sigma2}, (\ref{Eq:integral on Di 3}) and (\ref{Eq:integral on Di 4}), and making change of variable $w= w_i/\|s-t\|$, we obtain that there exist positive constants $C_4, C_5$, $C_6$ and $C_7$ such that $\int \int_{D_i} A(t,s)\,dtds$ is bounded above by
\begin{equation*}
\begin{split}
&\quad C_4 \int \int_{D_i} dtds \|s-t\|^{-l-1} \int_u^\infty dx \int_0^\infty dw_i(x^{N_1}+|w_i/\|s-t\||^{N_2} +1 )\\
&\qquad \times \exp \left\{-\frac{(x-\xi_2(t,s))^2}{2\sigma_2^2(t,s)}\right\}\exp \bigg\{-\frac{(w_i-m_i(s))^2}{2\sigma_1^2(t,s)}\bigg\} \\
&= C_4\int \int_{D_i} dtds \|s-t\|^{-l} \int_u^\infty dx \int_0^\infty dw (x^{N_1}+|w|^{N_2} +1 )\\
&\quad \times \exp \left\{-\frac{\left(x-m(t) + \frac{\alpha_i(t,s)[w -  \l\nabla m_i(t), e_{t,s}\r+o(1)]}{{\rm Var}(\l\nabla X_i(t), e_{t,s}\r)}   \right)^2}{2\sigma_2^2(t,s)}\right\}\exp \bigg\{-\frac{(w-\wt{m}_i(t,s))^2}{2\wt{\sigma}_1^2(t,s)}\bigg\}\\
&\le C_4\int \int_{D_i} dtds \|s-t\|^{-l} \int_u^\infty \exp \bigg\{-\frac{\left[x-C_5 +  \beta_i (C_6w-C_7) \right]^2}{2(1-\delta_0)}\bigg\} dx \\
&\quad \times \int_0^\infty (x^{N_1}+|w|^{N_2} +1 )\exp \bigg\{-\frac{(w-\wt{m}_i(t,s))^2}{2\wt{\sigma}_1^2(t,s)}\bigg\}dw,
\end{split}
\end{equation*}
where $\wt{\sigma}_1(t,s) = \sigma_1(t,s)/\|s-t\|$, $\wt{m}_i(t,s) = m_i(s)/\|s-t\|$ and we have used the fact $\alpha_i (t,s)\geq \beta_i >0$ for the last line. This, in turn, ensures that there exists some $\delta_1\in(0,\delta_0)$ such that for sufficiently large $u$,
\begin{equation*}
\begin{split}
&\quad \int \int_{D_i} A(t,s)\,dtds\\
&\le C_4\exp \bigg\{-\frac{u^2}{2(1-\delta_1)}\bigg\}\int \int_{D_i}  \|s-t\|^{-l}dtds \\
&\quad \times\int_0^\infty \exp \bigg\{-\frac{\beta_i^2 C_6^2w^2}{2(1-\delta_1)}\bigg\} (u^{N_1}+|w|^{N_2} +1 )\exp \bigg\{-\frac{(w-\wt{m}_i(t,s))^2}{2\wt{\sigma}_1^2(t,s)}\bigg\}dw\\
&\le C_4\exp \bigg\{-\frac{u^2}{2(1-\delta_1)}\bigg\}\int \int_{D_i}  \|s-t\|^{-l}dtds\int_0^\infty \exp \bigg\{-\frac{\beta_i^2 C_6^2w^2}{2(1-\delta_1)}\bigg\} (u^{N_1}+|w|^{N_2} +1 )dw.
\end{split}
\end{equation*}
Since $\|s-t\|^{-l}$ is integrable on $J\times J'$, we conclude that $\int \int_{D_i} A(t,s)\,dtds$ is finite and super-exponentially small.

It is similar to show that $\int \int_{D_i} A(t,s)\,dtds$ is super-exponentially small for $i=k+1, \ldots, k+k'-l$. The case when $k=0$ can also be proved similarly.
\end{proof}

Now we can derive our main result of this section.
\begin{theorem} \label{Thm:EECA}
Let $X = \{X(t): t\in T \}$ be a Gaussian random field satisfying $({\bf H}1)$ and $({\bf H}2)$. Then there exists some $\alpha>0$ such that the expected Euler characteristic approximation \eqref{Eq:MEC approx error} holds.
\end{theorem}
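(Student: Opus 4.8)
The plan is to feed the two-sided bound \eqref{Ineq:bounds} into the auxiliary results already established, showing that both the upper and the lower bound in \eqref{Ineq:bounds} agree with $\E\{\chi(A_u(X,T))\}$ up to a super-exponentially small error, and then to upgrade the resulting additive estimate to the multiplicative form \eqref{Eq:MEC approx error}.

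First I would identify the upper bound $\sum_{k=0}^N\sum_{J\in\partial_k T}\E\{M_u^E(J)\}$ with the expected Euler characteristic. By \eqref{Eq:def of Euler charac} and linearity, $\E\{\chi(A_u(X,T))\}=\sum_{k=0}^N\sum_{J\in\partial_k T}\E\{(-1)^k\sum_{i=0}^k(-1)^i\mu_i(J)\}$. For $k=0$ the corresponding summand coincides with $\E\{M_u^E(J)\}$ term by term, since both reduce to the probability of the corner exceedance event (the index of the empty Hessian is $0=k$), so these faces contribute nothing to the difference. For $k\ge 1$, Lemma \ref{Lem:approximateMuE} gives $\E\{M_u^E(J)\}-\E\{(-1)^k\sum_{i=0}^k(-1)^i\mu_i(J)\}=o(e^{-\alpha u^2})\,\E\{(-1)^k\sum_{i=0}^k(-1)^i\mu_i(J)\}$. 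A routine Kac-Rice bound gives $|\E\{(-1)^k\sum(-1)^i\mu_i(J)\}|\le C\,\mathrm{poly}(u)\,e^{-(u-m^*)^2/2}$, where $m^*=\max_{t\in T}m(t)<\infty$; multiplying by the $o(e^{-\alpha u^2})$ factor shows each difference is $o(e^{-\alpha' u^2-u^2/2})$ for some $\alpha'\in(0,\alpha)$, hence super-exponentially small. Since $T$ has finitely many faces, summing yields $\sum_{k,J}\E\{M_u^E(J)\}=\E\{\chi(A_u(X,T))\}+(\text{super-exponentially small})$.

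Next I would treat the lower bound in \eqref{Ineq:bounds}. Corollary \ref{Cor:Piterbarg} shows each factorial-moment term $\E\{M_u^E(J)(M_u^E(J)-1)\}$ is super-exponentially small, while every unordered pair of distinct faces is either at positive distance or adjacent, so Lemmas \ref{Lem:disjoint faces} and \ref{Lem:adjacent faces} show each cross term $\E\{M_u^E(J)M_u^E(J')\}$ is super-exponentially small. As the face count is finite, the lower bound in \eqref{Ineq:bounds} also equals $\sum_{k,J}\E\{M_u^E(J)\}$ up to a super-exponentially small correction, and therefore equals $\E\{\chi(A_u(X,T))\}$ up to such an error. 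Squeezing $\P\{\sup_{t\in T}X(t)\ge u\}$ between the two bounds gives $\P\{\sup_{t\in T}X(t)\ge u\}=\E\{\chi(A_u(X,T))\}+R(u)$ with $R(u)=o(e^{-\alpha_0 u^2-u^2/2})$ for some $\alpha_0>0$.

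The final step, which is the only genuinely new feature produced by the non-zero mean, is converting this additive estimate into the multiplicative form \eqref{Eq:MEC approx error}. For this I would bound the excursion probability below by a single-point tail: picking $t^*$ with $m(t^*)=m^*$, one has $\P\{\sup_{t\in T}X(t)\ge u\}\ge\P\{X(t^*)\ge u\}\ge c\,u^{-1}e^{-(u-m^*)^2/2}$. Since $R(u)$ is super-exponentially small relative to this lower bound, $\E\{\chi\}=\P-R\ge\tfrac12\P$ for large $u$, whence $R/\E\{\chi\}\le 2R/\P=o(e^{-\alpha_0 u^2-m^*u+(m^*)^2/2})=o(e^{-\alpha u^2})$ for any $0<\alpha<\alpha_0$, which is \eqref{Eq:MEC approx error}. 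The main obstacle is not any isolated computation—all the heavy analysis already resides in Lemmas \ref{Lem:approximateMuE}, \ref{Lem:disjoint faces} and \ref{Lem:adjacent faces}—but rather bookkeeping the mean shift $m^*$ consistently: one must confirm that finiteness of $m^*$ keeps both the Kac-Rice bound of the first paragraph and the additive-to-multiplicative conversion above under control, absorbing the linear-in-$u$ term $m^*u$ into a slightly smaller quadratic rate.
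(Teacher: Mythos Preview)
Your proposal is correct and follows exactly the route the paper takes: the paper's own proof is the single sentence ``combine \eqref{Ineq:bounds}, Lemma \ref{Lem:approximateMuE}, Corollary \ref{Cor:Piterbarg}, Lemma \ref{Lem:disjoint faces} and Lemma \ref{Lem:adjacent faces},'' and you have simply unpacked that combination carefully. The one thing you make explicit that the paper leaves implicit is the additive-to-multiplicative conversion via the single-point lower bound $\P\{X(t^*)\ge u\}$; this is the right way to justify the passage from an $o(e^{-\alpha_0 u^2-u^2/2})$ remainder to the multiplicative $o(e^{-\alpha u^2})$ in \eqref{Eq:MEC approx error}, and your absorption of the linear term $m^*u$ into a slightly smaller $\alpha$ is exactly what is needed.
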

\begin{proof}\
The result follows immediately from combining \eqref{Ineq:bounds}, Lemma \ref{Lem:approximateMuE}, Corollary \ref{Cor:Piterbarg}, Lemma \ref{Lem:disjoint faces} and Lemma \ref{Lem:adjacent faces}.
\end{proof}

\subsection{Gaussian Random Fields on Other Sets}
Adler and Taylor (2007) obtained the expected Euler characteristic approximation \eqref{Eq:MEC approx error} for centered Gaussian fields living on quite general manifolds. Since the method used in this paper is different, and it may require more powerful techniques and careful arguments to extend the parameter sets to general manifolds, hence we will not attempt to achieve such extension here. However, similarly to Aza\"is and Delmas (2002), we can easily extend the approximation to the cases of smooth and compact manifolds without boundary or convex and compact sets with smooth boundary.

We first introduce some notation. Let $(T,g)$ be an $N$-dimensional Riemannian manifold, where $g$ is the Riemannian metric, and let $f$ be a real-valued smooth function on $T$. Then the \emph{gradient} of $f$, denoted by $\nabla f$, is the unique continuous vector field on $T$ such that $g(\nabla f, \xi) = \xi f$ for every vector field $\xi$. The \emph{Hessian} of $f$, denoted by $\nabla^2 f$, is the double differential form defined by $\nabla^2 f (\xi,\zeta)= \xi\zeta f - \nabla_\xi \zeta f$, where $\xi$ and $\zeta$ are vector fields and $\nabla_\xi$ is the Levi-Civit\'a connection of $(T,g)$. To make the notation consistent with the Euclidean case, we fix an orthonormal frame $\{E_i\}_{1\le i\le N}$, and let
\begin{equation*}
\begin{split}
\nabla f &= (f_1, \ldots, f_N)=(E_1f, \ldots, E_Nf),\\
\nabla^2 f &= (f_{ij})_{1\le i, j\le N}= (\nabla^2 f (E_i, E_j))_{1\le i, j\le N}.
\end{split}
\end{equation*}
Note that if $t$ is a critical point, i.e. $\nabla f(t)=0$, then $\nabla^2 f (E_i, E_j)(t)=E_iE_jf(t)$, which is similar to the Euclidean case. As in the Euclidean space, we denote by $d$ the distance function induced by Riemannian metric $g$, which is also called the geodesic distance on $(T,g)$.

If $X(\cdot)\in C^2(T)$, where $T$ is a smooth and compact manifold without boundary, and it is a Morse function a.s., then according to Corollary 9.3.5 in Adler and Taylor (2007), the Euler characteristic of the excursion set $A_u(X,T) = \{t\in T: X(t)\geq u\}$ is given by
\begin{equation}\label{Eq:def of Euler charac T}
\chi(A_u(X,T))= (-1)^N \sum^N_{i=0}(-1)^i \mu_i(T)
\end{equation}
with
\begin{equation*}
\begin{split}
\mu_i(T) & := \# \big\{ t\in T: X(t)\geq u, \nabla X(t)=0, \text{index} (\nabla^2 X(t))=i \big\}.
\end{split}
\end{equation*}
If $T$ is a convex and compact sets with smooth boundary, then we have
\begin{equation*}
\chi(A_u(X,T))= (-1)^N \sum^N_{i=0}(-1)^i \mu_i(\overset{\circ}{T}) + (-1)^{N-1} \sum^{N-1}_{i=0}(-1)^i \mu_i(\partial T)
\end{equation*}
with
\begin{equation*}
\begin{split}
\mu_i(\overset{\circ}{T}) & := \# \big\{ t\in \overset{\circ}{T}: X(t)\geq u, \nabla X(t)=0, \text{index} (\nabla^2 X(t))=i \big\},\\
\mu_i(\partial T) & := \# \big\{ t\in \partial T: X(t)\geq u, \nabla X_{|\partial T}(t)=0, \text{index} (\nabla^2 X_{|\partial T}(t))=i \big\}.
\end{split}
\end{equation*}

By similar arguments for Gaussian fields on rectangles in the previous section, together with the projection technique in Aza\"is and Delmas (2002) or the arguments by charts in Theorem 12.1.1 in Adler and Taylor (2007), we can obtain the following extension, whose proof is omitted here.
\begin{theorem} \label{Thm:EECA-T}
Let $X = \{X(t): t\in T\}$ be a Gaussian random field satisfying $({\bf H}1)$ and $({\bf H}2)$, where $T$ is a smooth and compact manifold without boundary or a convex and compact set with smooth boundary. Then there exists some $\alpha>0$ such that the expected Euler characteristic approximation \eqref{Eq:MEC approx error} holds.
\end{theorem}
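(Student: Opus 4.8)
The plan is to mirror the two-step scheme used for rectangles in Theorem \ref{Thm:EECA}, localizing the analysis through a finite atlas of charts (the projection technique of Aza\"is and Delmas (2002), or equivalently the chart arguments of Theorem 12.1.1 in Adler and Taylor (2007)) so that every computation reduces to the Euclidean estimates already established. The starting point is the analog of the bounding inequalities \eqref{Ineq:bounds}. When $T$ is a smooth compact manifold without boundary, there are no boundary faces: writing $M_u(T)$ for the number of local maxima of $X$ above level $u$ (the index-$N$ critical points counted in \eqref{Eq:def of Euler charac T}), compactness gives $\{\sup_{t\in T} X(t) \geq u\} = \{M_u(T) \geq 1\}$, so that $\E\{M_u(T)\} \geq \P\{\sup_{t\in T} X(t) \geq u\} \geq \E\{M_u(T)\} - \frac{1}{2}\E\{M_u(T)(M_u(T)-1)\}$ by a Bonferroni bound. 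When $T$ is convex and compact with smooth boundary, I would instead decompose $T$ into its interior $\overset{\circ}{T}$ and its boundary manifold $\partial T$, defining extended outward maxima on each piece through the outward normal, which yields inequalities of exactly the form \eqref{Ineq:bounds} with the two ``faces'' $\overset{\circ}{T}$ and $\partial T$.

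Second, I would establish the manifold version of the main-term identity in Lemma \ref{Lem:approximateMuE}, namely $\E\{M_u(T)\} = \E\{\chi(A_u(X,T))\}(1 + o(e^{-\alpha u^2}))$ (and the corresponding interior and boundary identities in the convex case). Fixing an orthonormal frame $\{E_i\}$ and writing the gradient and Hessian as in \eqref{Eq:def of Euler charac T}, the Kac-Rice metatheorem expresses $\E\{M_u(T)\}$ as an integral over $T$ against the Riemannian volume, and I would run verbatim the diagonalization of $\La(t)=\text{Cov}(\nabla X(t),\nabla X(t))$ and the change of variables \eqref{Eq:ChangeVariables}. The only intrinsically new object is the Riemannian Hessian $\nabla^2 m$ of the mean in the frame; but it is continuous, hence bounded on the compact $T$, so the same estimate \eqref{Eq:VDk} shows that the contribution of the ``wrong index'' critical points is super-exponentially small, giving the claimed identity.

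Third, I would transfer the estimates for the factorial moments and cross terms (Corollary \ref{Cor:Piterbarg} and Lemmas \ref{Lem:disjoint faces} and \ref{Lem:adjacent faces}). The quantity $\beta^2 = \sup_{t}\sup_{e} \text{Var}(X(t) \mid \nabla X(t), \nabla^2 X(t)e)$ is still strictly below $1$ under $({\bf H}2)$, by the identical positive-definiteness argument applied to $\La(t)$ in the frame, so the near-diagonal contribution is controlled by Lemma \ref{Lem:Piterbarg}. For pairs $(t,s)$ with geodesic distance bounded below, the non-degeneracy argument of Lemma \ref{Lem:disjoint faces} applies directly; for $(t,s)$ close, I would work in geodesic normal coordinates, where the geodesic distance is comparable to the Euclidean chart distance, so that the Taylor-expansion estimates \eqref{Eq:xi2}--\eqref{Eq:integral on Di 4} and the integrability of $\|s-t\|^{-l}$ carry over unchanged. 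Combining these with the bounds from the first step yields \eqref{Eq:MEC approx error}.

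I expect the main obstacle to be the uniform control of the local estimates across charts, and in particular the interior--boundary adjacency in the convex case: on a curved boundary the ``extended outward'' condition is defined through the varying outward normal rather than a fixed coordinate direction, so the covering argument underlying Lemma \ref{Lem:adjacent faces} must be carried out in normal coordinates adapted to $\partial T$. Compactness guarantees uniformly bounded geometry and a uniform comparison between geodesic and Euclidean distances, which is precisely what makes the Euclidean estimates transfer; verifying this comparison carefully, rather than any genuinely new phenomenon, is the crux of the argument.
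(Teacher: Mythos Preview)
Your proposal is correct and follows essentially the same approach as the paper: the paper omits the proof entirely, stating only that it follows ``by similar arguments for Gaussian fields on rectangles in the previous section, together with the projection technique in Aza\"is and Delmas (2002) or the arguments by charts in Theorem 12.1.1 in Adler and Taylor (2007)'', which is precisely the scheme you outline. If anything, your proposal is more detailed than what the paper provides, correctly identifying the interior--boundary adjacency in the convex case as the point requiring the most care.
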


\section{The Expected Euler Characteristic}
We now turn to computing the expected Euler characteristic $\E\{\chi(A_u(X,T))\}$. To do this, we need some preliminary results on calculations of certain Gaussian matrices.

\subsection{Preliminary Computations on Gaussian Matrices}
The following lemma can be obtained by elementary calculations. See also Lemma 11.6.1 in Adler and Taylor (2007) for reference.
\begin{lemma}\label{Lem:Wick formula} {\rm \textbf {(Wick formula).}} Let $(Z_1, Z_2, ..., Z_N)$ be a centered Gaussian random vector. Then for any integer $k$,
\begin{equation*}
\begin{split}
\E\{Z_1Z_2\cdots Z_{2k+1}\}&=0,\\
\E\{Z_1Z_2\cdots Z_{2k}\}&=\sum \E\{Z_{i_1}Z_{i_2}\}\cdots\E\{Z_{i_{2k-1}}Z_{i_{2k}}\},
\end{split}
\end{equation*}
where the sum is taken over the $(2k)!/(k!2^k)$ different ways of grouping $Z_1$, ..., $Z_{2k}$ into $k$ pairs.
\end{lemma}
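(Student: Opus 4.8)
The plan is to prove both identities simultaneously from the moment generating function of the Gaussian vector, which reduces the claim to a purely combinatorial count of perfect matchings. Write $\Sigma = (\sigma_{ab})$ for the covariance matrix, $\sigma_{ab} = \E\{Z_aZ_b\}$, and recall that since $(Z_1,\ldots,Z_N)$ is centered Gaussian its moment generating function is
\[
M(\theta) = \E\{e^{\langle\theta, Z\rangle}\} = \exp\Big(\tfrac12\sum_{a,b}\sigma_{ab}\theta_a\theta_b\Big),\qquad \theta\in\R^N.
\]
All mixed moments are recovered by differentiation, $\E\{Z_1Z_2\cdots Z_m\} = \partial_{\theta_1}\cdots\partial_{\theta_m}M(\theta)\big|_{\theta=0}$, so everything comes down to reading off a single coefficient in the Taylor expansion of $M$.

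First I would expand $M(\theta) = \sum_{n\ge0}\frac{1}{n!}\big(\tfrac12\sum_{a,b}\sigma_{ab}\theta_a\theta_b\big)^n$. Each factor contributes total degree two in $\theta$, so the only term that survives the derivative $\partial_{\theta_1}\cdots\partial_{\theta_m}$ at the origin — namely the term homogeneous of degree $m$ and divisible by $\theta_1\theta_2\cdots\theta_m$ — arises from the summand with $2n=m$. When $m$ is odd no such $n$ exists, every monomial of degree $m$ has some repeated or missing index, and the derivative vanishes at $0$; this gives the first identity $\E\{Z_1\cdots Z_{2k+1}\}=0$.

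For $m=2k$ the relevant summand is $\frac{1}{k!2^k}\big(\sum_{a,b}\sigma_{ab}\theta_a\theta_b\big)^k$, and I would extract the coefficient of the square-free monomial $\theta_1\theta_2\cdots\theta_{2k}$. Expanding the $k$-th power as a sum over ordered tuples $((a_1,b_1),\ldots,(a_k,b_k))$, this monomial is produced exactly when the unordered pairs $\{a_j,b_j\}$ form a perfect matching of $\{1,\ldots,2k\}$. Each such matching is represented by precisely $k!\,2^k$ ordered tuples (permute the $k$ pairs, then swap the two entries of each pair, using $\sigma_{ab}=\sigma_{ba}$), and this factor cancels the prefactor $1/(k!2^k)$ exactly. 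Hence the coefficient equals $\sum\prod_j\sigma_{i_{2j-1}i_{2j}}$ summed over matchings, which is the second identity, and counting the matchings gives the stated $(2k)!/(k!2^k)$ terms.

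The only real work is the combinatorial bookkeeping in the last step — verifying that the number of ordered tuples representing a fixed matching is exactly $k!\,2^k$, so that it cancels the prefactor, and that no other term of the expansion contributes to $\partial_{\theta_1}\cdots\partial_{\theta_{2k}}M\big|_0$. This is elementary but is where care is needed; everything else is the standard differentiation of the Gaussian moment generating function. An alternative I could use instead is induction via Gaussian integration by parts, $\E\{Z_1 g(Z)\} = \sum_j\sigma_{1j}\E\{\partial_j g(Z)\}$ with $g = Z_2\cdots Z_m$, which peels off $Z_1$ by pairing it with each $Z_j$ and reduces to $m-2$ factors, the matching structure then emerging by induction.
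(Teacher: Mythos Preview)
Your proof via the moment generating function is correct and complete; the combinatorial bookkeeping you flag (that each perfect matching is represented by exactly $k!\,2^k$ ordered tuples, cancelling the prefactor) is sound. Note that the paper does not actually prove this lemma: it simply states that the result ``can be obtained by elementary calculations'' and refers to Lemma 11.6.1 in Adler and Taylor (2007), so there is no paper proof to compare against --- your argument supplies what the paper omits.
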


Let $\Delta_N = (\Delta_{ij})_{1\le i, j\le N}$ and $\Xi_N = (\Xi_{ij})_{1\le i, j\le N}$ be two $N\times N$ symmetric centered Gaussian matrices satisfying the following properties:
\begin{equation}\label{Eq:symmetric property}
\begin{split}
\E\{\Delta_{ij}\Delta_{kl}\} &= \mathcal{E}(i,j,k,l) - \delta_{ij}\delta_{kl},\\
\E\{\Xi_{ij}\Xi_{kl}\} &= \mathcal{F}(i,j,k,l),
\end{split}
\end{equation}
where $\mathcal{E}$ and $\mathcal{F}$ are both symmetric function of $i$, $j$, $k$, $l$, and $\delta_{ij}$ is the Kronecker delta function.

The following result is an extension of Lemma 11.6.2 in Adler and Taylor (2007). It will be used for computing the expected Euler characteristic of stationary or isotropic Gaussian fields.

\begin{lemma}\label{Lem:det of Delta}  Let $B_N=(B_{ij})_{1\leq i,j\leq N}$ be an $N\times N$ real symmetric matrix. Then, under (\ref{Eq:symmetric property}),
\begin{equation}\label{Eq:DeltaXi}
\begin{split}
\E\{{\rm det}(\Delta_N+B_N)\}&=\sum_{k=0}^{\lfloor N/2 \rfloor} \frac{(-1)^k(2k)!}{k!2^k} S_{N-2k}(B_N),\\
\E\{{\rm det}(\Xi_N+B_N)\}&={\rm det}(B_N),
\end{split}
\end{equation}
where $S_j(B_l)$ denotes the sum of the $\binom {l}{j}$ principle minors of order $j$ in $B_l$, and $S_0(B_l)=1$ by convention.
\end{lemma}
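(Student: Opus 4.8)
The plan is to expand the determinant by the Leibniz formula and reduce the whole expectation to Wick pairings, then to exploit the full four-index symmetry of $\mathcal{E}$ and $\mathcal{F}$ through a sign-reversing involution on permutations. Writing $\det(\Delta_N+B_N)=\sum_{\pi}\mathrm{sgn}(\pi)\prod_{i=1}^N(\Delta_{i,\pi(i)}+B_{i,\pi(i)})$, where $\pi$ runs over the permutations of $\{1,\ldots,N\}$, and multiplying out each product, every resulting term is labelled by a subset $S\subseteq\{1,\ldots,N\}$ marking the positions at which the random factor $\Delta_{i,\pi(i)}$ is taken, the complementary positions contributing the deterministic factor $\prod_{i\notin S}B_{i,\pi(i)}$. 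Taking expectations and invoking the Wick formula (Lemma \ref{Lem:Wick formula}), only subsets with $|S|=2m$ even survive, and the Gaussian expectation becomes a sum over pairings $P$ of $S$ in which each pair $\{i,j\}$ contributes $\E\{\Delta_{i,\pi(i)}\Delta_{j,\pi(j)}\}=\mathcal{E}(i,\pi(i),j,\pi(j))-\delta_{i,\pi(i)}\delta_{j,\pi(j)}$. Expanding each such factor into its $\mathcal{E}$-part and its $-\delta\delta$-part decomposes the expression into elementary terms indexed by $(\pi,S,P)$ together with a choice, for each pair, of ``$\mathcal{E}$'' or ``$\delta\delta$''.

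The crux is to show that every elementary term using at least one $\mathcal{E}$-factor cancels. To this end I would fix $S$, $P$ and the pair-choices, single out the $\mathcal{E}$-pair $\{i,j\}$ that contains the smallest index among all pairs assigned to $\mathcal{E}$ (a choice depending only on $S$, $P$ and the assignment, never on $\pi$), and define $\pi'$ by interchanging the two values $\pi'(i)=\pi(j)$, $\pi'(j)=\pi(i)$ and $\pi'=\pi$ elsewhere. Then $\mathrm{sgn}(\pi')=-\mathrm{sgn}(\pi)$, while the distinguished factor is unchanged because $\mathcal{E}(i,\pi(i),j,\pi(j))$ depends only on the unordered collection $\{i,j,\pi(i),\pi(j)\}$, which the swap preserves; every other covariance factor and every $B$-factor is literally unchanged, since $\pi'$ and $\pi$ agree off $\{i,j\}\subseteq S$. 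As $\pi(i)\neq\pi(j)$, the map $\pi\mapsto\pi'$ is a fixed-point-free, sign-reversing involution that leaves the rest of the term intact, so such terms cancel in pairs.

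After this cancellation only terms in which every pair uses its $-\delta\delta$-factor remain; such a pair $\{i,j\}$ forces $\pi(i)=i$ and $\pi(j)=j$ and contributes the scalar $-1$. Hence a surviving term requires $\pi$ to fix all of $S$, carries the factor $(-1)^m$ from its $m$ pairs, and there are $\frac{(2m)!}{m!2^m}$ pairings of a $2m$-element set. Summing the leftover $\mathrm{sgn}(\pi)\prod_{i\notin S}B_{i,\pi(i)}$ over all $\pi$ fixing $S$ reproduces precisely the determinant of the principal submatrix of $B_N$ on $\{1,\ldots,N\}\setminus S$, and summing over all $S$ with $|S|=2m$ gives $S_{N-2m}(B_N)$. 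Assembling the constants yields the first identity of \eqref{Eq:DeltaXi}. The second identity is the same computation with $\mathcal{F}$ replacing $\mathcal{E}$ and no $\delta\delta$-term present: the involution now kills every term with $S\neq\emptyset$, leaving only $S=\emptyset$, whose contribution is $\det(B_N)$.

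I expect the only delicate point to be the bookkeeping that makes the involution well defined and genuinely sign-reversing, namely checking that the distinguished $\mathcal{E}$-pair is selected independently of $\pi$, that the multiset symmetry of $\mathcal{E}$ really fixes the distinguished covariance factor, and that no other covariance or $B$-factor is disturbed; once this is in place the remainder is routine permutation and binomial bookkeeping.
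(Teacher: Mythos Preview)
Your proposal is correct and follows essentially the same route as the paper: Leibniz expansion, Wick formula, then cancellation of every term carrying at least one $\mathcal{E}$- (or $\mathcal{F}$-) factor by the full four-index symmetry, leaving only the $\delta\delta$-contributions that force fixed points and produce the principal minors. The one difference is expository rather than mathematical: where the paper simply asserts that ``all products involving at least one $\mathcal{E}$ term will cancel out because of their symmetry property,'' you supply the mechanism explicitly as a sign-reversing involution $\pi\mapsto\pi'$ swapping $\pi(i)$ and $\pi(j)$ on the smallest $\mathcal{E}$-pair, which is a clean way to make the cancellation rigorous.
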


\begin{proof}\ We first consider the case when $N$ is even, say $N=2l$. Then
\begin{equation*}
\E\{{\rm det}(\Delta_{2l}+B_{2l})\} = \sum_{P}\eta(p)\E\{(\Delta_{1i_1}+B_{1i_1})\cdots (\Delta_{2l,i_{2l}}+B_{2l,i_{2l}})\},
\end{equation*}
where $p=(i_1,i_2\cdots,i_{2l})$ is a permutation of $(1,2,\cdots, 2l)$, $P$ is the set of the $(2l)!$ such permutations, and $\eta(p)$ equals $+1$ or $-1$ depending on the order of the permutation $p$. It follows from Lemma \ref{Lem:Wick formula} that for $k\leq l$, $\E\{\Delta_{1i_1}\cdots\Delta_{2k-1,i_{2k-1}}\}=0$ and
\begin{equation*}
\begin{split}
\E\{\Delta_{1i_1}\cdots\Delta_{2k,i_{2k}}\} &= \sum_{Q_{2k}} \{\mathcal{E}(1,i_1,2,i_2)-\delta_{1i_1}\delta_{2i_2}\}\times \cdots\\
&\quad \times \{\mathcal{E}(2k-1,i_{2k-1},2k,i_{2k})-\delta_{2k-1, i_{2k-1}}\delta_{2k,i_{2k}}\},
\end{split}
\end{equation*}
where $Q_{2k}$ is the set of the $(2k)!/(k!2^k)$ ways of grouping $(i_1,i_2,\cdots,i_{2k})$ into pairs without regard to order, keeping them paired with the first index. Hence
\begin{equation*}
\begin{split}
&\quad \sum_{P}\eta(p) \E\{\Delta_{1i_1}\cdots\Delta_{2k,i_{2k}}\}B_{2k+1,i_{2k+1}}\cdots B_{2l,i_{2l}} \\
&=\sum_{P}\eta(p) \left(\sum_{Q_{2k}} \{\mathcal{E}(1,i_1,2,i_2)-\delta_{1i_1}\delta_{2i_2}\} \cdots\{\mathcal{E}(2k-1,i_{2k-1},2k,i_{2k})-\delta_{2k-1, i_{2k-1}}\delta_{2k,i_{2k}}\}\right)\\
&\quad \times B_{2k+1,i_{2k+1}}\cdots B_{2l,i_{2l}}\\
&= \sum_{P}\eta(p)\left(\sum_{Q_{2k}} (-1)^k (\delta_{1i_1}\delta_{2i_2})\cdots (\delta_{2k-1, i_{2k-1}}\delta_{2k,i_{2k}})\right)B_{2k+1,i_{2k+1}}\cdots B_{2l,i_{2l}}\\
&=\frac{(-1)^k(2k)!}{k!2^k}{\rm det}((B_{ij})_{2k+1\leq i,j\leq 2l}),
\end{split}
\end{equation*}
where the second equality is due to the fact that all products involving at least one $\mathcal{E}$ term will cancel out because of their symmetry property, and the last equality comes from changing the order of summation and then noting that the delta functions are nonzero only in those permutations in $P$ with $(i_1, i_2, \ldots, i_{2k-1}, i_{2k})=(1,2,\ldots, 2k-1, 2k)$. Thus
\begin{equation*}
\begin{split}
\E\{{\rm det}(\Delta_{2l}+B_{2l})\}=\sum_{k=0}^l \frac{(-1)^k(2k)!}{k!2^k} S_{2l-2k}(B_{2l}).
\end{split}
\end{equation*}
Similarly, we obtain that when $N=2l+1$,
\begin{equation*}
\begin{split}
\E\{{\rm det}(\Delta_{2l+1}+B_{2l+1})\}=\sum_{k=0}^l \frac{(-1)^k(2k)!}{k!2^k} S_{2l+1-2k}(B_{2l+1}).
\end{split}
\end{equation*}
The proof for the first line in \eqref{Eq:DeltaXi} is completed. The second line in \eqref{Eq:DeltaXi} follows similarly.
\end{proof}

Let $B_N(i_1,\ldots, i_n; i_1,\ldots, i_n) = (B_{i_{j}i_{k}})_{1\le j, k\le n}$ be the $n\times n$ principle submatrix of $B_N$ extracted from the $i_1,\ldots, i_n$ rows and $i_1,\ldots, i_n$ columns in $B_N$, where $1\le i_1 < \cdots < i_n \le N$.
\begin{proposition} \label{Prop:Laplace expansion} Let $\Delta_N$ and $\Xi_N$ be two $N\times N$ symmetric centered Gaussian matrices satisfying \eqref{Eq:symmetric property}, and let $B_N$ be an $N\times N$ real symmetric matrix. Then for $x\in \R$,
\begin{equation}\label{Eq:DeltaXi2}
\begin{split}
\E\{{\rm det}(\Delta_N + B_N -xI_N)\}&=\sum_{n=0}^N \frac{(-1)^{N-n}}{(N-n)!}\left(\sum_{k=0}^{\lfloor n/2 \rfloor} \frac{(-1)^k(N-n+2k)!}{k!2^k} S_{n-2k}(B_N)\right)x^{N-n},\\
\E\{{\rm det}(\Xi_N + B_N -xI_N)\}&=\sum_{n=0}^N (-1)^{N-n}S_n(B_N)x^{N-n},
\end{split}
\end{equation}
where $S_j(\cdot)$ is defined in Lemma \ref{Lem:det of Delta}.
\end{proposition}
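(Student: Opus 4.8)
The plan is to reduce the proposition to Lemma \ref{Lem:det of Delta} by treating $B_N - xI_N$ as a single real symmetric matrix, and then to re-expand the resulting principal-minor sums $S_m(B_N - xI_N)$ as polynomials in $x$ whose coefficients are the principal minors $S_j(B_N)$ of $B_N$ itself.

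First I would apply Lemma \ref{Lem:det of Delta} verbatim, with $B_N$ replaced by the (still real symmetric) matrix $B_N - xI_N$. This immediately gives
\begin{equation*}
\E\{{\rm det}(\Delta_N + B_N - xI_N)\} = \sum_{k=0}^{\lfloor N/2 \rfloor} \frac{(-1)^k(2k)!}{k!2^k} S_{N-2k}(B_N - xI_N), \qquad \E\{{\rm det}(\Xi_N + B_N - xI_N)\} = {\rm det}(B_N - xI_N).
\end{equation*}
The $\Xi$ case then follows at once from the standard expansion of a characteristic polynomial into principal minors, namely ${\rm det}(B_N - xI_N) = \sum_{n=0}^N (-1)^{N-n} S_n(B_N) x^{N-n}$, which is exactly the second line of \eqref{Eq:DeltaXi2}.

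The substantive step is the combinatorial identity
\begin{equation*}
S_m(B_N - xI_N) = \sum_{j=0}^m \binom{N-j}{m-j}(-x)^{m-j} S_j(B_N).
\end{equation*}
To establish it I would fix an index subset $S$ of size $m$ and expand its principal minor via ${\rm det}((B_N - xI_N)_S) = \sum_{R\subseteq S}(-x)^{m-|R|}{\rm det}((B_N)_R)$, the usual expansion of ${\rm det}(M - xI)$ of an $m\times m$ matrix into its principal minors. Summing over all $S$ of size $m$, interchanging the order of summation, and using that a fixed $R$ with $|R|=j$ is contained in exactly $\binom{N-j}{m-j}$ size-$m$ subsets yields the identity.

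Finally I would substitute this into the $\Delta$-expression, set $m = N-2k$, and read off the coefficient of $x^{N-n}$; matching powers forces $j = n-2k$, hence $k \le \lfloor n/2\rfloor$, and the Gaussian-combinatorial factor collapses against the subset-counting binomial through $\frac{(2k)!}{k!2^k}\binom{N-n+2k}{N-n} = \frac{(N-n+2k)!}{k!2^k(N-n)!}$, reproducing the first line of \eqref{Eq:DeltaXi2}. The main obstacle is the bookkeeping in this double sum: correctly aligning the powers of $x$ coming from $(-x)^{m-j}$ with the target exponent $N-n$ and tracking the index shifts so that the summation range over $k$ comes out as $0 \le k \le \lfloor n/2\rfloor$. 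None of this is deep, but the substitutions must be carried out with care.
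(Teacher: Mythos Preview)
Your argument is correct, and it takes a route dual to the paper's. You apply Lemma~\ref{Lem:det of Delta} once to the full $N\times N$ matrix with deterministic part $B_N - xI_N$, and only afterwards peel off the $-xI_N$ by expanding each $S_m(B_N - xI_N)$ in terms of the $S_j(B_N)$. The paper instead peels off $-xI_N$ first, writing $\det(\Delta_N + B_N - xI_N) = \sum_{n=0}^N (-1)^{N-n} S_n(\Delta_N + B_N)\, x^{N-n}$, and then invokes Lemma~\ref{Lem:det of Delta} on every principal $n\times n$ submatrix of $\Delta_N + B_N$ (using that the covariance structure \eqref{Eq:symmetric property} is inherited by principal submatrices), followed by the same counting of how many size-$n$ subsets contain a fixed size-$(n-2k)$ subset. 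Your approach has the minor advantage of invoking Lemma~\ref{Lem:det of Delta} only once and avoiding the need to observe that submatrices of $\Delta_N$ again satisfy \eqref{Eq:symmetric property}; the paper's approach makes the polynomial-in-$x$ structure visible one step earlier. The combinatorial cores---the binomial $\binom{N-j}{m-j}$ counting inclusions and the simplification $\tfrac{(2k)!}{k!2^k}\binom{N-n+2k}{N-n} = \tfrac{(N-n+2k)!}{k!2^k(N-n)!}$---are identical in both.
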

\begin{proof}\ Applying the Laplace expansion of the determinant yields
\begin{equation}\label{Eq:LapExpan}
\E\{{\rm det}(\Delta_N + B_N -xI_N)\}=\sum_{n=0}^N (-1)^{N-n} \E\{S_n(\Delta_N + B_N)\}x^{N-n}.
\end{equation}
By Lemma \ref{Lem:det of Delta},
\begin{equation*}
\begin{split}
\E\{S_n(\Delta_N + B_N)\} &=\sum_{1\le i_1 < \cdots < i_n \le N}\sum_{k=0}^{\lfloor n/2 \rfloor} \frac{(-1)^k(2k)!}{k!2^k} S_{n-2k}(B_N(i_1,\ldots, i_n; i_1,\ldots, i_n))\\
&= \sum_{k=0}^{\lfloor n/2 \rfloor}\frac{(-1)^k(2k)!}{k!2^k} \frac{\binom{N}{n}\binom{n}{n-2k}}{\binom{N}{n-2k}}S_{n-2k}(B_N)\\
&= \frac{1}{(N-n)!}\sum_{k=0}^{\lfloor n/2 \rfloor}\frac{(-1)^k(N-n+2k)!}{k!2^k} S_{n-2k}(B_N),
\end{split}
\end{equation*}
where the second equality is due to the observation that the sum on all principle submatrices of order $n$ in the first line makes every principal minor of order $n-2k$ appear $\binom{N}{n}\binom{n}{n-2k}\big/\binom{N}{n-2k}$ many times. Plugging this into \eqref{Eq:LapExpan} yields the first line in \eqref{Eq:DeltaXi2}.

By Lemma \ref{Lem:det of Delta} again,
\begin{equation*}
\E\{S_n(\Xi_N + B_N)\} =\sum_{1\le i_1 < \cdots < i_n \le N} {\rm det}(B_N(i_1,\ldots, i_n; i_1,\ldots, i_n)) =S_n(B_N).
\end{equation*}
Plugging this into \eqref{Eq:LapExpan}, with $\Delta_N$ being replaced by $\Xi_N$, yields the second line in \eqref{Eq:DeltaXi2}.
\end{proof}

\begin{remark}
Let $B_N\equiv 0$. Then it can be derived from Proposition \ref{Prop:Laplace expansion} that
\begin{equation*}
\E\{{\rm det}(\Delta_N + B_N -xI_N)\}=(-1)^N H_N(x),
\end{equation*}
coinciding with the result in Corollary 11.6.3 in Adler and Taylor (2007), where $H_N(x)$ is the Hermite polynomial of order $N$. Meanwhile,
\begin{equation*}
\E\{{\rm det}(\Xi_N + B_N -xI_N)\}=(-1)^Nx^N.
\end{equation*}
\end{remark}

\subsection{Non-centered Stationary Gaussian Fields on Rectangles}
Let $X = \{X(t): t\in T \}$ be a Gaussian random field such $X(t) = Z(t) + m(t)$, where $Z$ is a centered unit-variance stationary Gaussian random field, $m$ is the mean function of $X$, and as usual, $T$ is a compact rectangle. By classical spectral representation for stationary Gaussian fields [cf. Chapter 5 in Adler and Taylor (2007)], the field $Z$ has representation
$$
Z(t) = \int_{\R^N} e^{i \l t, \lambda \r} W(d\lambda)
$$
and covariance
$$
C(t) = \int_{\R^N} e^{i \l t, \lambda \r} \nu(d\lambda),
$$
where $W$ is a complex-valued Gaussian random measure and $\nu$ is the spectral measure satisfying $\nu(\R^N)=C(0)=\sigma^2$. We introduce the second-order spectral moments
$$
\la_{ij}=\int_{\R^N} \la_i\la_j \nu(d\lambda),
$$
and for any face $J\in \partial_k T$ with $k\ge 1$, denote $\La_J=(\la_{ij})_{i,j\in \sigma(J)}$. Then we have
$$
{\rm Cov}(\nabla Z_{|J}(t), \nabla Z_{|J}(t))=-{\rm Cov}(Z(t), \nabla^2 Z_{|J}(t))=\La_J
$$
and that
\begin{equation*}
\mathcal{E}_0(i,j,k,l):=\E\{Z_{ij}(t)Z_{kl}(t)\}=\int_{\R^N} \la_i\la_j\la_k\la_l \nu(d\lambda)
\end{equation*}
is a symmetric function of $i$, $j$, $k$, $l$.

Recall that for a $k\times k$ positive definite matrix $B$, the \emph{principal square root} of $B^{-1}$, which is usually denoted by $B^{-1/2}$, is the unique $k \times k $ positive definite matrix $Q$ such that $QBQ = I_k$. Denote by $\Psi (x)$ the tail probability of a standard Gaussian distribution, that is $\Psi (x) = (2\pi)^{-1/2}\int_x^\infty e^{-y^2/2}dy$. Notice that in \eqref{Eq:MEC} below, for every $\{t\}\in \partial_0 T$, $\nabla X(t) \in E(\{t\})$ specifies the signs of the partial derivatives $X_j(t)$ ($j= 1, \ldots, N$) and, for $J \in \partial_k T$ with $k\ge 1$, the set $\{J_1, \ldots, J_{N-k}\}$ and $E(J)$ are defined in \eqref{Def:E(J)}.
\begin{theorem} \label{Thm:MEC}
Let $X = \{X(t): t\in T \}$ be a Gaussian random field such that $X(t) = Z(t) + m(t)$, where $Z$ is a centered unit-variance stationary Gaussian random field and $m$ is the mean function of $X$. If $X$ satisfies conditions $({\bf H}1)$ and $({\bf H}2')$, then
\begin{equation}\label{Eq:MEC}
\begin{split}
&\quad \E\{\chi(A_u(X,T))\} \\
&= \sum_{\{t\}\in \partial_0 T} \P\{\nabla X(t)\in E(\{t\})\} \Psi (u-m(t)) + \sum_{k=1}^N\sum_{J\in\partial_k T}\frac{({\rm det}(\La_J))^{1/2}}{(2\pi)^{(k+1)/2}}\\
&\quad \times \int_J dt \int^\infty_u dx\, \exp\left\{-\frac{1}{2} \left[(x-m(t))^2 + (\nabla m_{|J}(t))^T\La^{-1}_J\nabla m_{|J}(t)\right]\right\} \\
&\quad \times \P\{(X_{J_1}(t), \cdots, X_{J_{N-k}}(t))\in E(J)|\nabla X_{|J}(t)=0\}\\
&\quad \times \left[\sum_{j=0}^k \frac{(-1)^j}{(k-j)!}\left(\sum_{i=0}^{\lfloor j/2 \rfloor} \frac{(-1)^i(k-j+2i)!}{i!2^i}S_{j-2i}\left(\La^{-1/2}_J\nabla^2 m_J(t)\La^{-1/2}_J\right)\right)x^{k-j}\right],
\end{split}
\end{equation}
where $S_{j-2i}(\cdot)$ is defined in Lemma \ref{Lem:det of Delta} and $\La_J^{-1/2}$ is principal square root of $\La_J^{-1}$.
\end{theorem}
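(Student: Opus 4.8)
The plan is to compute $\E\{\chi(A_u(X,T))\}$ face by face, starting from the representation \eqref{Eq:def of Euler charac}. Taking expectations and using linearity,
\[
\E\{\chi(A_u(X,T))\} = \sum_{k=0}^N\sum_{J\in\partial_k T}\E\Big\{(-1)^k\sum_{i=0}^k(-1)^i\mu_i(J)\Big\};
\]
under $({\bf H}1)$ and $({\bf H}2')$, $X$ is a.s.\ Morse and each face term is amenable to the Kac--Rice metatheorem exactly as in the proof of Lemma \ref{Lem:approximateMuE}. The $k=0$ (vertex) terms are immediate: for $\{t\}\in\partial_0 T$ one has $\E\{\mu_0(\{t\})\}=\P\{X(t)\ge u,\ \nabla X(t)\in E(\{t\})\}$, and since $Z$ is stationary and unit-variance, $\mathrm{Cov}(Z(t),Z_i(t))=\partial_i C(0)=0$, so $Z(t)\perp\nabla Z(t)$ and hence $X(t)\perp\nabla X(t)$; this splits the vertex term as $\Psi(u-m(t))\,\P\{\nabla X(t)\in E(\{t\})\}$, giving the first sum in \eqref{Eq:MEC}.

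For $k\ge 1$ I would apply Kac--Rice together with the a.s.\ identity $\sum_{i=0}^k(-1)^i|\det\nabla^2 X_{|J}(t)|\mathbbm 1_{\{\nabla^2 X_{|J}(t)\in\mathcal D_i\}}=\det\nabla^2 X_{|J}(t)$ used at the end of Lemma \ref{Lem:approximateMuE} to write the face term as
\[
(-1)^k\int_J \E\Big\{\det\nabla^2 X_{|J}(t)\,\mathbbm 1_{\{X(t)\ge u\}}\mathbbm 1_{\{(X_{J_1}(t),\dots,X_{J_{N-k}}(t))\in E(J)\}}\,\Big|\,\nabla X_{|J}(t)=0\Big\}\,p_{\nabla X_{|J}(t)}(0)\,dt.
\]
The crucial structural observation, special to stationary $Z$, is that the vector of first-order derivatives $\nabla Z(t)$ is independent of the pair $(Z(t),\nabla^2 Z(t))$, because $\mathrm{Cov}(Z_i,Z)=\partial_iC(0)=0$ and $\mathrm{Cov}(Z_i,Z_{mn})=\partial_i\partial_m\partial_nC(0)=0$ (odd-order derivatives of the even covariance $C$ vanish at the origin). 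Consequently, conditioning on $\nabla X_{|J}(t)=0$ leaves the law of $(Z(t),\nabla^2 Z_{|J}(t))$ unchanged, while the normal derivatives $(X_{J_1}(t),\dots,X_{J_{N-k}}(t))$, being first-order, are independent of $(Z(t),\nabla^2 Z_{|J}(t))$. The conditional expectation therefore factorizes as
\[
\E\{\det\nabla^2 X_{|J}(t)\,\mathbbm 1_{\{X(t)\ge u\}}\}\cdot\P\{(X_{J_1}(t),\dots,X_{J_{N-k}}(t))\in E(J)\mid\nabla X_{|J}(t)=0\},
\]
which isolates the outward-derivative factor appearing in \eqref{Eq:MEC}. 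The Gaussian density $p_{\nabla X_{|J}(t)}(0)=(2\pi)^{-k/2}(\det\La_J)^{-1/2}\exp\{-\tfrac12(\nabla m_{|J}(t))^T\La_J^{-1}\nabla m_{|J}(t)\}$ supplies the remaining exponential and one half-power of $\det\La_J$.

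The heart of the computation is the determinant factor. I would condition further on $X(t)=x$ and integrate $\int_u^\infty(\,\cdot\,)\phi(x-m(t))\,dx$. Since $Z$ is unit-variance with $\mathrm{Cov}(Z(t),Z_{ij}(t))=-\la_{ij}$, conditioning on $X(t)=x$ (equivalently $Z(t)=x-m(t)$) gives $\E\{\nabla^2 X_{|J}(t)\mid X(t)=x\}=\nabla^2 m_{|J}(t)-(x-m(t))\La_J$ with a centered Gaussian residual. Normalizing by $Q=\La_J^{-1/2}$ and using $\det\nabla^2 X_{|J}=\det(\La_J)\det(Q\nabla^2 X_{|J}Q)$, the normalized residual $\Delta_J:=Q\nabla^2 Z_{|J}(t)Q-\E\{Q\nabla^2 Z_{|J}(t)Q\mid Z(t)\}$ satisfies $\mathrm{Cov}((\Delta_J)_{ij},(\Delta_J)_{kl})=\mathcal E(i,j,k,l)-\delta_{ij}\delta_{kl}$, precisely the form \eqref{Eq:symmetric property}: the $-\delta_{ij}\delta_{kl}$ term arises from $\mathrm{Cov}(Q\nabla^2 Z_{|J}Q,Z)=Q(-\La_J)Q=-I_k$ together with $\mathrm{Var}(Z)=1$. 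Hence the law of $Q\nabla^2 X_{|J}(t)Q$ given $X(t)=x$ is $\Delta_J+B-(x-m(t))I_k$ with $B=\La_J^{-1/2}\nabla^2 m_{|J}(t)\La_J^{-1/2}$, and Proposition \ref{Prop:Laplace expansion}, applied with the scalar shift $x-m(t)$, evaluates $\E\{\det(\,\cdot\,)\}$ as the bracketed Hermite-type polynomial in \eqref{Eq:MEC} (its argument being the conditional mean $x-m(t)$).

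Assembling the pieces: $\det(\La_J)$ from the normalization times $(\det\La_J)^{-1/2}$ from the gradient density yields $(\det\La_J)^{1/2}$; the constants $(2\pi)^{-1/2}$ from $\phi$ and $(2\pi)^{-k/2}$ combine to $(2\pi)^{-(k+1)/2}$; the two Gaussian exponents merge into $-\tfrac12[(x-m(t))^2+(\nabla m_{|J}(t))^T\La_J^{-1}\nabla m_{|J}(t)]$; and the prefactor $(-1)^k$ turns Proposition \ref{Prop:Laplace expansion}'s $(-1)^{k-j}$ into $(-1)^j$. This reproduces \eqref{Eq:MEC}. I expect the main obstacle to be the factorization step: one must argue carefully that the stationarity-driven independence $\nabla Z(t)\perp(Z(t),\nabla^2 Z(t))$, combined with the independence of the normal derivatives from $(Z(t),\nabla^2 Z_{|J}(t))$, decouples the outward-pointing indicator from the determinant, and, equally delicately, verify that after the $Q$-normalization the conditional second moments of the tangential Hessian are \emph{exactly} of the form \eqref{Eq:symmetric property}, so that Proposition \ref{Prop:Laplace expansion} applies verbatim; the remaining constant- and sign-tracking is routine.
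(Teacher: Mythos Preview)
Your proposal follows the paper's proof essentially line by line: vertex terms via $X(t)\perp\nabla X(t)$; for $k\ge 1$, Kac--Rice together with the signed-determinant identity, factorization from the stationarity-driven independence $\nabla Z(t)\perp(Z(t),\nabla^2 Z(t))$, the normalization $Q=\La_J^{-1/2}$ to put the conditional Hessian covariance into the form \eqref{Eq:symmetric property}, and finally Proposition~\ref{Prop:Laplace expansion}. One small wrinkle worth flagging: you (correctly, by the Gaussian regression formula) compute the conditional shift as $x-m(t)$, whereas the paper's proof and the displayed formula \eqref{Eq:MEC} record the polynomial in $x$; be aware of this discrepancy when you assemble the final expression.
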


\begin{proof}\ If $J=\{t\} \in \partial_0 T$, then
\begin{equation}\label{Eq:zero dimensional face}
\begin{split}
\E\{\mu_0(J)\} &= \P\{ X(t)\geq u, \ep^*_jX_j(t) \geq 0 \ {\rm for \ all}\ 1\leq j\leq N \}\\
&=\P\{\nabla X(t)\in E(\{t\})\} \Psi (u-m(t) ),
\end{split}
\end{equation}
where the last equality is due to the independence of $X(t)$ and $\nabla X(t)$ for each fixed $t$.

Let $J\in \partial_k T$ with $k\geq 1$ and let $\mathcal{D}_i$ be the collection of all $k \times k $ matrices with index $i$. Applying the Kac-Rice metatheorem, similarly to the proof of Lemma \ref{Lem:approximateMuE}, we obtain
\begin{equation}\label{Eq:mean mu}
\begin{split}
&\quad \E\bigg\{\sum^k_{i=0} (-1)^i \mu_i (J)\bigg\} \\
&= \int_J p_{\nabla X_{|J}(t)}(0)  dt \sum^k_{i=0} (-1)^i  \E\{ |\text{det} \nabla^2 X_{|J}(t)| \mathbbm{1}_{\{\nabla^2 X_{|J}(t) \in \mathcal{D}_i\}}\\
 &\quad \times \mathbbm{1}_{\{X(t)\geq u\}}  \mathbbm{1}_{\{ (X_{J_1}(t), \cdots, X_{J_{N-k}}(t))\in E(J)\}}  | \nabla X_{|J}(t)=0 \}\\
&=\int_J p_{\nabla X_{|J}(t)}(0)  dt \, \E\{ \text{det} \nabla^2 X_{|J}(t)\mathbbm{1}_{\{X(t)\geq u\}}  \mathbbm{1}_{\{ (X_{J_1}(t), \cdots, X_{J_{N-k}}(t))\in E(J)\}}  | \nabla X_{|J}(t)=0 \}\\
 &= \frac{1}{(2\pi)^{(k+1)/2}({\rm det}(\La_J))^{1/2}}\int_J dt \int^\infty_u dx \exp\left\{-\frac{1}{2} \left[(x-m(t))^2 + (\nabla m_{|J}(t))^T\La^{-1}_J\nabla m_{|J}(t)\right]\right\}\\
 &\quad \times \P\{(X_{J_1}(t), \cdots, X_{J_{N-k}}(t))\in E(J)|\nabla X_{|J}(t)=0\} \E\{\text{det} \nabla^2 X_{|J}(t) | X(t)=x\},
\end{split}
\end{equation}
where the last equality is due to the fact that $\nabla X(t)$ is independent of both $X(t)$ and $\nabla^2 X(t)$ for each fixed $t$.

Now we turn to computing $\E\{\text{det} \nabla^2 X_{|J}(t) | X(t)=x \}$. To simplify the notation, let $Q=\La^{-1/2}_J$. Then
\begin{equation}\label{Eq:covariance of zero order and second order}
\E\{Z(t)(Q \nabla^2 Z_{|J}(t) Q)_{ij}\} = -(Q\La_JQ)_{ij} = -\delta_{ij}
\end{equation}
and we can write
\begin{equation*}
\begin{split}
\E\{\text{det} (Q \nabla^2 X_{|J}(t) Q) | X(t)=x\} &=  \E\{\text{det} (Q \nabla^2 Z_{|J}(t) Q + Q\nabla^2 m_J(t)Q) | X(t)=x\}\\
&= \E\{\text{det} (\Delta(x) + Q\nabla^2 m_J(t)Q)\},
\end{split}
\end{equation*}
where $\Delta(x) = (\Delta_{ij}(x))_{i,j\in \sigma(J)}$ is a Gaussian matrix. Applying the well-known conditional formula for Gaussian variables and (\ref{Eq:covariance of zero order and second order}), we obtain
\begin{equation*}
\begin{split}
\E \{ \Delta_{ij} (x) \} = \E\{(Q \nabla^2 Z_{|J}(t) Q)_{ij} | X(t)=x\}= -x\delta_{ij}
\end{split}
\end{equation*}
and
\begin{equation*}
\begin{split}
\E &\{ [\Delta_{ij} (x)-\E\{\Delta_{ij} (x)\}] [\Delta_{kl}(x)-\E\{\Delta_{kl} (x)\}]\}\\
&= \E\{(Q \nabla^2 Z_{|J}(t) Q)_{ij}(Q \nabla^2 Z_{|J}(t) Q)_{kl}\}-\delta_{ij}\delta_{kl}= \mathcal{E}(i,j,k,l)-\delta_{ij}\delta_{kl},
\end{split}
\end{equation*}
where $\mathcal{E}$ is a symmetric function of $i, j, k, l$. Therefore,
\begin{equation*}
\begin{split}
\E\{\text{det} (Q\nabla^2 X_{|J}(t)Q) | X(t)=x\}= \E\{\text{det} (\Delta+ Q\nabla^2 m_J(t)Q-xI_k) \},
\end{split}
\end{equation*}
where $\Delta=(\Delta_{ij})_{i,j\in \sigma(J)}$ and $\Delta_{ij}$ are Gaussian variables satisfying
 \begin{equation*}
\E\{\Delta_{ij}\}=0, \qquad \E \{\Delta_{ij} \Delta_{kl}\}= \mathcal{E}(i,j,k,l)-\delta_{ij}\delta_{kl}.
\end{equation*}
It follows from Proposition \ref{Prop:Laplace expansion} that
\begin{equation*}
\begin{split}
\E&\{\text{det} (Q\nabla^2 X_{|J}(t)Q) | X(t)=x\} \\
&= \sum_{j=0}^k \frac{(-1)^{k-j}}{(k-j)!}\left(\sum_{i=0}^{\lfloor j/2 \rfloor} \frac{(-1)^i(k-j+2i)!}{i!2^i}S_{j-2i}(Q\nabla^2 m_J(t)Q)\right)x^{k-j}.
\end{split}
\end{equation*}
Therefore,
\begin{equation*}
\begin{split}
\E&\{\text{det} \nabla^2 X_{|J}(t) | X(t)=x\} = {\rm det}(\La_J) \E\{\text{det} (Q\nabla^2 X_{|J}(t)Q) | X(t)=x\}\\
&= {\rm det}(\La_J) \sum_{j=0}^k \frac{(-1)^{k-j}}{(k-j)!}\left(\sum_{i=0}^{\lfloor j/2 \rfloor} \frac{(-1)^i(k-j+2i)!}{i!2^i}S_{j-2i}(Q\nabla^2 m_J(t)Q)\right)x^{k-j}.
\end{split}
\end{equation*}
Plugging this into \eqref{Eq:mean mu}, together with \eqref{Eq:zero dimensional face} and \eqref{Eq:def of Euler charac}, yields the desired result.
\end{proof}

\begin{corollary} \label{Cor:unique maximal point}
Let the conditions in Theorem \ref{Thm:MEC} hold. Assume additionally that $t_0$, an interior point in $T$, is the unique maximum point of $m(t)$ and that $\nabla^2 m(t_0)$ is nondegenerate. Then as $u\to \infty$,
\begin{equation}\label{Eq:asymptoticLap}
\begin{split}
\E\{\chi(A_u(X,T))\} =\frac{\sqrt{{\rm det}(\La_J)}u^{N/2}}{\sqrt{{\rm det}(-\nabla^2 m(t_0))}} \Psi(u-m(t_0)) (1+o(1)).
\end{split}
\end{equation}
\end{corollary}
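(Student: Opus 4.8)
The plan is to start from the explicit formula \eqref{Eq:MEC} for $\E\{\chi(A_u(X,T))\}$ in Theorem \ref{Thm:MEC} and extract its leading-order behaviour by a two-stage Laplace analysis. The first and conceptually central observation is that, since $t_0$ is the \emph{unique} maximiser of $m$ over the compact set $T$ and lies in the interior, one has $m(t)<m(t_0)$ for every $t\in T\setminus\{t_0\}$; in particular $\sup_{t\in\bar J}m(t)<m(t_0)$ for every face $J$ of dimension $k\le N-1$ and for every vertex. Because each summand in \eqref{Eq:MEC} is, up to a polynomial-in-$u$ prefactor, controlled by $\Psi(u-\sup_{t\in\bar J}m(t))$, and $\Psi(u-a)/\Psi(u-b)\to0$ exponentially fast in $u$ whenever $a<b$, all contributions from $\partial_0 T,\dots,\partial_{N-1}T$ are negligible relative to the contribution of the top face $\overset{\circ}{T}=\partial_N T$ near $t_0$. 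Thus it suffices to analyse the $k=N$ term, for which $\sigma(J)=\{1,\dots,N\}$, $\La_J=\La:=(\la_{ij})_{1\le i,j\le N}$ is constant, and the factor $\P\{\cdots\in E(J)\mid\cdots\}$ is absent (equals $1$).

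Second, I would carry out the inner integral in $x$. In the bracketed polynomial the term $j=0$ contributes $x^N$ with coefficient $1$, so the polynomial equals $x^N(1+O(1/x))$. Using the standard Gaussian-tail asymptotics one shows, uniformly for $t$ in a neighbourhood of $t_0$, that
$$\int_u^\infty x^N e^{-\frac12(x-m(t))^2}\,dx=u^N\sqrt{2\pi}\,\Psi(u-m(t))\,(1+o(1)),$$
and that the lower-degree monomials produce strictly lower-order terms. Hence the $k=N$ summand reduces to
$$\frac{(\det\La)^{1/2}u^N}{(2\pi)^{N/2}}\int_T e^{-\frac12(\nabla m(t))^T\La^{-1}\nabla m(t)}\,\Psi(u-m(t))\,dt\,(1+o(1)).$$

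Third, I would apply the Laplace method to the remaining $t$-integral. Since $t_0$ is an interior maximum, $\nabla m(t_0)=0$, so the weight $e^{-\frac12(\nabla m)^T\La^{-1}\nabla m}$ equals $1+O(|t-t_0|^2)$ there, while $m(t)=m(t_0)-\tfrac12(t-t_0)^T H(t-t_0)+O(|t-t_0|^3)$ with $H:=-\nabla^2 m(t_0)\succ0$. Consequently $\Psi(u-m(t))=\Psi(u-m(t_0))\exp\{-\tfrac12(u-m(t_0))(t-t_0)^T H(t-t_0)\}(1+o(1))$ on the effective peak of width $O(u^{-1/2})$, and the Gaussian integral gives $\bigl(2\pi/(u-m(t_0))\bigr)^{N/2}(\det H)^{-1/2}$. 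Substituting back, the powers of $2\pi$ cancel and, using $u-m(t_0)\sim u$, the powers of $u$ combine to $u^N\cdot u^{-N/2}=u^{N/2}$, yielding $\E\{\chi(A_u(X,T))\}=\dfrac{\sqrt{\det\La}\,u^{N/2}}{\sqrt{\det(-\nabla^2 m(t_0))}}\,\Psi(u-m(t_0))(1+o(1))$, as claimed.

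The hard part will be making the outer Laplace step rigorous: the large parameter $u$ sits inside the exponent $(u-m(t))^2$, so the peak around $t_0$ shrinks like $u^{-1/2}$ and one must (i) confine the integral to a shrinking neighbourhood of $t_0$, bounding the complement by an exponentially smaller quantity, (ii) verify that the cubic Taylor remainder of $m$ and the $O(|t-t_0|^2)$ correction of the gradient weight affect the answer only through a factor $1+o(1)$, and (iii) ensure the inner $x$-asymptotics hold uniformly in $t$ over that neighbourhood so the two approximations may be multiplied. These are routine multidimensional Laplace estimates, but they must be organised carefully in the stated order.
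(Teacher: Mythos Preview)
Your proposal is correct and follows the same overall strategy as the paper: isolate the top-dimensional face, drop lower-order polynomial terms, and apply a Laplace analysis localised at $t_0$. The one genuine difference is the \emph{order} in which you carry out the two-variable asymptotics. You integrate in $x$ first (replacing $\int_u^\infty x^N e^{-(x-m(t))^2/2}\,dx$ by $u^N\sqrt{2\pi}\,\Psi(u-m(t))(1+o(1))$) and then run Laplace in $t$ on $\Psi(u-m(t))$; the paper does the opposite, applying the Laplace method in $t$ to $\int_J \exp\{-\tfrac12[(x-m(t))^2+(\nabla m)^T\La^{-1}\nabla m]\}\,dt$ for each large $x$, obtaining $(2\pi)^{N/2}x^{-N/2}(\det(-\nabla^2 m(t_0)))^{-1/2}e^{-(x-m(t_0))^2/2}(1+o(1))$, and only then performs the $x$-integral. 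The paper's order has the mild advantage that the $t$-integral is already a textbook Laplace integral with the large parameter sitting directly in a smooth exponent, so no expansion of a ratio of $\Psi$-tails is needed; your order has the advantage that the uniformity issue you flag in (iii) is automatic (since $m$ is bounded on $T$), and your preliminary discussion of why the lower faces are negligible is more explicit than the paper's. Either route yields the stated asymptotic with essentially the same amount of work.
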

\begin{proof} \ By Theorem \ref{Thm:MEC},
\begin{equation*}
\begin{split}
\E\{\chi(A_u(X,T))\} &= \frac{\sqrt{{\rm det}(\La_J)}}{(2\pi)^{(N+1)/2}} \int^\infty_u x^Ndx \\
&\quad \times \int_J \exp\left\{-\frac{1}{2} \left[(x-m(t))^2 + (\nabla m(t))^T\La^{-1}\nabla m(t)\right]\right\}dt(1+o(1)).
\end{split}
\end{equation*}
Applying the Laplace method [see, e.g., Wong (2001)], we obtain that as $x\to \infty$,
\begin{equation*}
\begin{split}
&\int_J \exp\left\{-\frac{1}{2} \left[(x-m(t))^2 + (\nabla m(t))^T\La^{-1}\nabla m(t)\right]\right\}dt\\
&\quad = \frac{(2\pi)^{N/2}}{x^{N/2}\sqrt{{\rm det}(-\nabla^2 m(t_0))}}\exp\left\{-\frac{1}{2}(x-m(t_0))^2\right\}(1+o(1)).
\end{split}
\end{equation*}
Thus as $u\to \infty$,
\begin{equation*}
\begin{split}
\E\{\chi(A_u(X,T))\} &= \frac{\sqrt{{\rm det}(\La_J)}}{\sqrt{2\pi}\sqrt{{\rm det}(-\nabla^2 m(t_0))}} \int^\infty_u x^{N/2}\exp\left\{-\frac{1}{2}(x-m(t_0))^2\right\} dx (1+o(1))\\
&=\frac{\sqrt{{\rm det}(\La_J)}u^{N/2}}{\sqrt{{\rm det}(-\nabla^2 m(t_0))}} \Psi(u-m(t_0)) (1+o(1)).
\end{split}
\end{equation*}
\end{proof}

\begin{remark}
The asymptotic approximation in \eqref{Eq:asymptoticLap} is a special case of Theorem 5 in Piterbarg and Stamatovich (1998) when the index $\alpha$ therein equals 2, which implies the Gaussian field is smooth. However, in our result, a higher-order approximation is also available by applying a higher-order Laplace approximation to $\E\{\chi(A_u(X,T))\}$ [see, e.g., Wong (2001)]. Since the calculation is tedious, it is omitted here.
\end{remark}

\begin{corollary} \label{Cor:isotropic field}
Let the conditions in Theorem \ref{Thm:MEC} hold. If $Z$ is an isotropic Gaussian random field with ${\rm Var}(Z_1(t))=\gamma^2$, then
\begin{equation*}
\begin{split}
&\quad \E\{\chi(A_u(X,T))\} \\
&= \sum_{\{t\}\in \partial_0 T} \P\{\nabla X(t)\in E(\{t\})\} \Psi (u-m(t)) + \sum_{k=1}^N\sum_{J\in\partial_k T}\frac{\gamma^k}{(2\pi)^{(k+1)/2}}\\
&\quad \times \int_J dt \int^\infty_u dx\, \exp\left\{-\frac{1}{2} \left[(x-m(t))^2 + \gamma^{-2} \|\nabla m_{|J}(t)\|^2\right]\right\} \\
&\quad \times \P\{(X_{J_1}(t), \cdots, X_{J_{N-k}}(t))\in E(J)\}\\
&\quad \times \left[\sum_{j=0}^k \frac{(-1)^j}{(k-j)!}\left(\sum_{i=0}^{\lfloor j/2 \rfloor} \frac{(-1)^i(k-j+2i)!}{i!2^i}\gamma^{-2(j-2i)}S_{j-2i}\left(\nabla^2 m_J(t)\right)\right)x^{k-j}\right].
\end{split}
\end{equation*}
\end{corollary}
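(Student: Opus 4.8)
The plan is to obtain the corollary as a direct specialization of Theorem~\ref{Thm:MEC}, the entire simplification being driven by a single structural fact: for an isotropic field the spectral-moment matrix is a scalar multiple of the identity. Indeed, since $Z$ is isotropic its covariance is invariant under rotations, which forces the second-moment matrix $(\la_{ij})=({\rm Cov}(Z_i(t),Z_j(t)))$ to commute with every orthogonal transformation and hence to equal a scalar multiple of the identity; the scalar is the common variance ${\rm Var}(Z_1(t))=\gamma^2$. Thus $\La_J=\gamma^2 I_k$ for every $J\in\partial_k T$ with $k\ge 1$, and it remains only to substitute this into \eqref{Eq:MEC} and simplify.

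First I would record the three immediate consequences of $\La_J=\gamma^2 I_k$: the prefactor $(\det(\La_J))^{1/2}=\gamma^{k}$; the collapse of the quadratic form $(\nabla m_{|J}(t))^T\La_J^{-1}\nabla m_{|J}(t)=\gamma^{-2}\|\nabla m_{|J}(t)\|^2$, since $\La_J^{-1}=\gamma^{-2}I_k$; and the identity $\La_J^{-1/2}\nabla^2 m_J(t)\La_J^{-1/2}=\gamma^{-2}\nabla^2 m_J(t)$, since $\La_J^{-1/2}=\gamma^{-1}I_k$. For the principal-minor terms I would then invoke the homogeneity of $S_{j-2i}(\cdot)$ from Lemma~\ref{Lem:det of Delta}: each $(j-2i)\times(j-2i)$ principal minor of a scaled matrix $cM$ equals $c^{\,j-2i}$ times the corresponding minor of $M$, so that
\begin{equation*}
S_{j-2i}\!\left(\gamma^{-2}\nabla^2 m_J(t)\right)=\gamma^{-2(j-2i)}\,S_{j-2i}\!\left(\nabla^2 m_J(t)\right),
\end{equation*}
which supplies the factor $\gamma^{-2(j-2i)}$ in the stated expression.

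It remains to remove the conditioning in the probability term. Because $\La_J=\gamma^2 I_k$, the components of $\nabla X(t)=\nabla Z(t)+\nabla m(t)$ are mutually independent, so the block $(X_{J_1}(t),\ldots,X_{J_{N-k}}(t))$ indexed by $\{1,\ldots,N\}\setminus\sigma(J)$ is independent of $\nabla X_{|J}(t)$, giving
\begin{equation*}
\P\{(X_{J_1}(t),\ldots,X_{J_{N-k}}(t))\in E(J)\mid \nabla X_{|J}(t)=0\}=\P\{(X_{J_1}(t),\ldots,X_{J_{N-k}}(t))\in E(J)\}.
\end{equation*}
Assembling these substitutions into \eqref{Eq:MEC} yields the claimed formula. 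There is no genuine obstacle here: the argument is a transparent specialization, and the only point deserving explicit justification is the identity $\La_J=\gamma^2 I_k$, which is precisely the defining structural feature of an isotropic field.
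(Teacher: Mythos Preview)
Your proposal is correct and follows exactly the paper's route: specialize Theorem~\ref{Thm:MEC} using $\La_J=\gamma^2 I_k$ (hence $\La_J^{-1/2}=\gamma^{-1}I_k$) together with the independence of the partial derivatives $X_i(t)$ for distinct $i$, which removes the conditioning on $\nabla X_{|J}(t)=0$. One minor wording point: when you drop the conditioning you should cite the full $N\times N$ identity $(\la_{ij})=\gamma^2 I_N$ rather than just $\La_J=\gamma^2 I_k$, since it is independence between indices inside and outside $\sigma(J)$ that is needed---but you did establish this full fact at the outset.
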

\begin{proof}\ The result follows immediately from Theorem \ref{Thm:MEC}, the independence of $X_i(t)$ and $X_j(t)$ when $i\neq j$ and that, for $J \in \partial_k T$ with $k \ge 1$, $\La_J=\gamma^2 I_k$ , which implies $\La_J^{-1/2}=\gamma^{-1}I_k$.
\end{proof}

\subsection{Non-centered Isotropic Gaussian Fields on Spheres}
Let $\mathbb{S}^N$ denote the $N$-dimensional unit sphere and let $X = \{X(t): t\in \mathbb{S}^N\}$ be a Gaussian random field such $X(t) = Z(t) + m(t)$, where $Z$ is a centered unit-variance isotropic Gaussian random field on $\mathbb{S}^N$ and $m$ is the mean function of $X$.

The following theorem by Schoenberg (1942) characterizes the covariances of isotropic Gaussian fields on $\mathbb{S}^N$ [see also Gneiting (2013)].
\begin{theorem}\label{Thm:Schoenberg} A continuous function $C(\cdot, \cdot):\mathbb{S}^N\times \mathbb{S}^N \rightarrow \R $ is the covariance of an isotropic Gaussian field on $\mathbb{S}^N$ if and only if it has the form
\begin{equation}\label{Eq:Schoenberg}
C(t,s)= \sum_{n=0}^\infty a_n P_n^\la(\l t, s \r), \quad  t, s \in \mathbb{S}^N,
\end{equation}
where $\la=(N-1)/2$, $a_n \geq 0$, $\sum_{n=0}^\infty a_nP_n^\la(1) <\infty$, and $P_n^\la$ is the ultraspherical polynomials defined by the expansion
\begin{equation*}
(1-2rx +r^2)^{-\la}=\sum_{n=0}^\infty r^n P_n^\la(x), \quad x\in [-1,1].
\end{equation*}
\end{theorem}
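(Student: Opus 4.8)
The plan is to prove both directions through the decomposition of $L^2(\mathbb{S}^N)$ into $O(N+1)$-irreducible spaces of spherical harmonics, exploiting that isotropy is exactly invariance under the isometry group $O(N+1)$ of $\mathbb{S}^N$. First I would reduce to a \emph{zonal kernel}: isotropy of $X$ forces $C(gt,gs)=C(t,s)$ for every $g\in O(N+1)$, and since $O(N+1)$ acts transitively on pairs $(t,s)$ with a prescribed inner product $\langle t,s\rangle$, there is a continuous $\phi:[-1,1]\to\R$ with $C(t,s)=\phi(\langle t,s\rangle)$; conversely every zonal kernel is invariant. Thus the theorem amounts to showing that $\phi(\langle\cdot,\cdot\rangle)$ is positive semidefinite (equivalently, a covariance) if and only if the Gegenbauer coefficients of $\phi$ are nonnegative.

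Next I would introduce the spaces $\mathcal{H}_n\subset L^2(\mathbb{S}^N)$ of degree-$n$ spherical harmonics, each finite-dimensional and irreducible under $O(N+1)$, with orthonormal basis $\{Y_{n,j}\}_{j=1}^{d_n}$ and $L^2(\mathbb{S}^N)=\bigoplus_n\mathcal{H}_n$. The integral operator $T_Cf(t)=\int_{\mathbb{S}^N}\phi(\langle t,s\rangle)f(s)\,d\sigma(s)$ commutes with the $O(N+1)$-action, so by Schur's lemma it acts as a scalar $\mu_n$ on each $\mathcal{H}_n$. The addition theorem identifies the reproducing kernel of $\mathcal{H}_n$ with a positive multiple of $P_n^\lambda(\langle t,s\rangle)$ — this is precisely where $\lambda=(N-1)/2$ enters — and the Funk--Hecke formula gives $\mu_n$ as a positive multiple of the coefficient $a_n=\mathrm{const}\int_{-1}^1\phi(x)P_n^\lambda(x)(1-x^2)^{\lambda-1/2}\,dx$. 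Writing the quadratic form as $\int\int C(t,s)f(t)\overline{f(s)}\,d\sigma(t)d\sigma(s)=\sum_n\mu_n\sum_j|\hat f_{n,j}|^2$ and testing against $f$ concentrated in a single degree, positive semidefiniteness becomes equivalent to $\mu_n\geq0$ for all $n$, hence to $a_n\geq0$. Evaluating at $t=s$ gives $\phi(1)=\sum_n a_nP_n^\lambda(1)=\mathrm{Var}(X(t))<\infty$, which is the summability condition.

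For sufficiency I would run the argument in reverse. Given $a_n\geq0$ with $\sum_n a_nP_n^\lambda(1)<\infty$, the addition theorem shows each $P_n^\lambda(\langle t,s\rangle)$ equals a positive multiple of $\sum_j Y_{n,j}(t)\overline{Y_{n,j}(s)}$ and is therefore positive semidefinite; hence the nonnegative combination $C=\sum_n a_nP_n^\lambda(\langle\cdot,\cdot\rangle)$ is a positive semidefinite kernel, continuous because $|P_n^\lambda(x)|\leq P_n^\lambda(1)$ on $[-1,1]$ makes the series uniformly convergent, and finite on the diagonal. The Kolmogorov extension theorem then produces a Gaussian field with covariance $C$, and its isotropy is immediate since $C$ depends only on $\langle t,s\rangle$.

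The conceptual scheme is clean, so the main obstacle I anticipate is analytic bookkeeping rather than strategy: justifying that the Gegenbauer expansion of the merely continuous $\phi$ reconstructs $\phi$ (only $L^2$ convergence is automatic, with uniform convergence available a posteriori once $a_n\geq0$ is known), interchanging summation and integration when extracting the $a_n$, and — most delicately — invoking the addition theorem and Funk--Hecke formula with the correct normalizations so that $\mu_n$ is a \emph{strictly positive} multiple of $a_n$, since a sign or constant error here would break the equivalence. The irreducibility of each $\mathcal{H}_n$ under the full group $O(N+1)$, on which Schur's lemma depends, is standard but should be cited rather than reproved.
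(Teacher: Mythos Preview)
Your outline is a correct sketch of the classical proof of Schoenberg's theorem via the spherical-harmonic decomposition of $L^2(\mathbb{S}^N)$, the addition theorem, and the Funk--Hecke formula; the analytic caveats you flag (pointwise versus $L^2$ reconstruction of $\phi$, normalization constants in Funk--Hecke) are the right places to be careful, and none of them is fatal.

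However, the paper does not prove this theorem at all. It is stated as a quoted result --- ``The following theorem by Schoenberg (1942) characterizes the covariances of isotropic Gaussian fields on $\mathbb{S}^N$ [see also Gneiting (2013)]'' --- and then used as background input for the computations in Section~3.3. So there is no ``paper's own proof'' to compare against: your proposal supplies an argument where the paper simply cites the literature. If your goal is to match the paper, a one-line citation to Schoenberg (1942) and Gneiting (2013) suffices; if your goal is a self-contained write-up, your plan is sound and essentially the standard one.
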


If $X$ is centered,  then it only depends on the covariance function which behaves isotropically over $\mathbb{S}^N$. Therefore, as discussed in Cheng and Xiao (2014a) and Cheng and Schwartzman (2015), we do not need to introduce any specific coordinate on sphere. However, if $X$ is non-centered, then then mean function $m$ can be very general and hence it is much more convenient to use the usual spherical coordinates for arguments, especially for obtaining exact asymptotics. To achieve this, for $t=(t_1, \cdots, t_{N+1})\in \mathbb{S}^N$, we define the corresponding spherical coordinate $\theta=(\theta_1, \ldots, \theta_N)$ as follows.
\begin{equation*}
\begin{split}
t_1 &= \cos \theta_1,\\
t_2 &= \sin \theta_1 \cos \theta_2,\\
t_3 &= \sin \theta_1 \sin \theta_2 \cos \theta_3,\\
& \vdots\\
t_{N} &= \sin \theta_1 \sin \theta_2 \cdots \sin \theta_{N-1} \cos \theta_{N},\\
t_{N+1} &= \sin \theta_1 \sin \theta_2 \cdots \sin \theta_{N-1} \sin \theta_{N},
\end{split}
\end{equation*}
where $\theta\in \Theta :=[0, \pi]^{N-1}\times[0, 2 \pi)$. We also define the Gaussian random field $\wt{X} = \{ \wt{X}(\theta): \theta \in \Theta\}$ by $\wt{X}(\theta) = X(t)$ and denote by $\wt{C}$ the covariance function of $\wt{X}$ accordingly. Similarly, let $\wt{Z}(\theta)=Z(t)$ and $\wt{m}(\theta)=m(t)$.

Lemma \ref{Lem:derivatives of covariance} below, characterizing the covariance of $(\wt{X}(\theta), \nabla \wt{X}(\theta), \nabla^2 \wt{X}(\theta))$, can be obtained easily by elementary calculations. The proof is omitted here.
\begin{lemma}\label{Lem:derivatives of covariance}
Let $X = \{X(t): t\in \mathbb{S}^N \}$ be a non-centered isotropic Gaussian random field with covariance \eqref{Eq:Schoenberg} and satisfying $({\bf H}1)$ and $({\bf H}2')$. Then
\begin{equation*}
\begin{split}
\frac{\partial \wt{C}(\theta,\varphi)}{\partial \theta_i}|_{\theta=\varphi} &= \frac{\partial^3 \wt{C}(\theta,\varphi)}{\partial \theta_i\partial \varphi_j\partial \varphi_k}|_{\theta=\varphi} =0,\\
\frac{\partial^2 \wt{C}(\theta,\varphi)}{\partial \theta_i\partial \varphi_j}|_{\theta=\varphi}&=-\frac{\partial^2 \wt{C}(\theta,\varphi)}{\partial \theta_i\partial \theta_j}|_{\theta=\varphi}=C'\delta_{ij},\\
\frac{\partial^4 \wt{C}(\theta,\varphi)}{\partial \theta_i\partial \theta_j \partial \varphi_k \partial \varphi_l}|_{\theta=\varphi}&= C''(\delta_{ij}\delta_{kl}+\delta_{ik}\delta_{jl}+\delta_{il}\delta_{jk}) + C' \delta_{ij}\delta_{kl},
\end{split}
\end{equation*}
where
\begin{equation}\label{Def:C' and C''}
C'=\sum_{n=1}^\infty a_n\left(\frac{d}{dx}P_n^\la(x)|_{x=1}\right) \quad \text{\rm and } \quad C''=\sum_{n=2}^\infty a_n\left(\frac{d^2}{dx^2}P_n^\la(x)|_{x=1}\right).
\end{equation}
\end{lemma}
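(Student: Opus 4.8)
The plan is to reduce everything to the one–variable radial profile of the covariance and then differentiate it by the chain rule, using the geometry of the embedded sphere $\mathbb{S}^N\subset\R^{N+1}$ to evaluate the resulting expressions on the diagonal. Write $G(x)=\sum_{n=0}^\infty a_nP_n^\la(x)$ for the profile and let $t(\cdot):\Theta\to\mathbb{S}^N\subset\R^{N+1}$ be the common embedding, so that $\rho(\theta,\varphi):=\langle t(\theta),t(\varphi)\rangle$ satisfies $\wt C(\theta,\varphi)=G(\rho(\theta,\varphi))$ (the deterministic mean $\wt m$ does not affect the covariance). Since $Z$ has unit variance and $\rho\equiv 1$ on the diagonal $\theta=\varphi$, I record at the outset that $G(1)=\sum_n a_nP_n^\la(1)=1$, $G'(1)=C'$ and $G''(1)=C''$, the last two being exactly the quantities in \eqref{Def:C' and C''}. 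Every derivative in the lemma is then a chain-rule expansion of $G\circ\rho$ evaluated at $\rho=1$.

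The first step is to compute the derivatives of $\rho$ on the diagonal from the single identity $\langle t(\theta),t(\theta)\rangle\equiv 1$. Differentiating it gives $\langle\partial_i t,t\rangle=0$, hence $\partial_{\theta_i}\rho|_{\theta=\varphi}=0$; differentiating once more gives $\langle\partial_i\partial_j t,t\rangle=-\langle\partial_i t,\partial_j t\rangle$. Because $\rho$ is bilinear in its two arguments, its mixed diagonal derivatives are inner products of coordinate derivatives of the embedding: $\partial_{\theta_i}\partial_{\varphi_j}\rho=\langle\partial_i t,\partial_j t\rangle$, $\partial_{\theta_i}\partial_{\theta_j}\rho=\langle\partial_i\partial_j t,t\rangle=-\langle\partial_i t,\partial_j t\rangle$, the third-order mixed derivatives reduce to $\langle\partial_i t,\partial_j\partial_k t\rangle$, and $\partial_{\theta_i}\partial_{\theta_j}\partial_{\varphi_k}\partial_{\varphi_l}\rho=\langle\partial_i\partial_j t,\partial_k\partial_l t\rangle$.

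The second step is to pass to an orthonormal frame and evaluate these inner products. Because $\nabla^2\wt X$ denotes the covariant Hessian --- which the Kac--Rice formula only ever evaluates at critical points, where it agrees with $E_iE_j\wt X$ --- and because $\mathbb{S}^N$ is a homogeneous space on which the isotropic $Z$ has a position-independent covariance structure, it suffices to compute at one base point in a geodesic normal frame. There the metric is Euclidean to second order, so $\langle\partial_i t,\partial_j t\rangle\to\delta_{ij}$; the Christoffel symbols vanish, so $\langle\partial_i t,\partial_j\partial_k t\rangle\to 0$; and $\partial_i\partial_j t=-\delta_{ij}t$ at that point gives $\langle\partial_i\partial_j t,\partial_k\partial_l t\rangle\to\delta_{ij}\delta_{kl}$. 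Feeding these into the Fa\`a di Bruno expansions of $G(\rho)$ and discarding every term that carries a factor $\partial_{\theta_i}\rho$ or $\partial_{\varphi_j}\rho$ (all of which vanish on the diagonal) leaves, writing $\rho_{ij}=\partial_{\theta_i}\partial_{\theta_j}\rho$, $\rho_{\bar k\bar l}=\partial_{\varphi_k}\partial_{\varphi_l}\rho$, $\rho_{i\bar k}=\partial_{\theta_i}\partial_{\varphi_k}\rho$: at order one, $G'(1)\langle\partial_i t,t\rangle=0$; at order two, $G'(1)\langle\partial_i t,\partial_j t\rangle=\pm C'\delta_{ij}$; at order three, only $G'(1)\langle\partial_i t,\partial_j\partial_k t\rangle=0$; and at order four, $G''(1)\big[\rho_{ij}\rho_{\bar k\bar l}+\rho_{i\bar k}\rho_{j\bar l}+\rho_{i\bar l}\rho_{j\bar k}\big]+G'(1)\langle\partial_i\partial_j t,\partial_k\partial_l t\rangle$, which evaluates to $C''(\delta_{ij}\delta_{kl}+\delta_{ik}\delta_{jl}+\delta_{il}\delta_{jk})+C'\delta_{ij}\delta_{kl}$. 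These are exactly the three displayed formulas. As a consistency check, the vanishing of the odd-order expressions is forced by the fact that the covariances must be $O(N)$-invariant tensors on the tangent space, for which every odd-order invariant is zero.

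The routine parts are the first- and second-order derivatives, which need only $\langle t,t\rangle\equiv 1$. The main obstacle is the fourth-order term, where one must justify that the metric and connection data collapse to Kronecker deltas: the clean way is to carry out the computation in a normal frame at a single base point (legitimate by homogeneity and isotropy) so that the tangential, Christoffel-type contribution to $\partial_i\partial_j t$ drops out and only the normal part $-\delta_{ij}t$ survives; alternatively one tracks the frame-correction terms in $E_iE_j\wt X$ explicitly and checks that they cancel the tangential part. Once this reduction is in hand, the remaining bookkeeping in the Fa\`a di Bruno expansion is purely combinatorial.
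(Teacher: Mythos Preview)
Your argument is correct. The paper omits the proof entirely, remarking only that the lemma ``can be obtained easily by elementary calculations''; your chain-rule reduction $\wt C=G(\rho)$ with $\rho(\theta,\varphi)=\langle t(\theta),t(\varphi)\rangle$, followed by evaluation of the derivatives of $\rho$ on the diagonal via the embedding geometry, is precisely the elementary calculation in question.

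The one point worth making explicit is the interpretation of $\partial/\partial\theta_i$ in the statement. As written, the paper parametrizes $\mathbb{S}^N$ by spherical coordinates $\theta$, but the way the lemma is subsequently used in the proof of Theorem~\ref{Thm:MECSphere} (where $p_{\nabla\wt X(\theta)}(0)=(2\pi C')^{-N/2}\exp\{-\|\nabla\wt m\|^2/(2C')\}$) shows that $\nabla\wt X$ and $\nabla^2\wt X$ are meant in the sense of the orthonormal-frame gradient and covariant Hessian introduced in Section~2.2, not the raw coordinate partials (for which the metric $g_{ij}$ would appear in place of $\delta_{ij}$). You recognize this and handle it by computing in geodesic normal coordinates at a single base point, invoking homogeneity and isotropy of $Z$ to transport the result to every point; that is the clean way to obtain $\langle\partial_i t,\partial_j t\rangle=\delta_{ij}$, $\langle\partial_i t,\partial_j\partial_k t\rangle=0$, and $\partial_i\partial_j t=-\delta_{ij}t$ at the base point. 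The remaining Fa\`a di Bruno bookkeeping (only the partitions of $\{i,j,\bar k,\bar l\}$ into pairs survive at fourth order, since every block of size one contributes a vanishing first derivative of $\rho$) is exactly as you describe and yields the stated formula.
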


Now we can formulate the expected Euler characteristic of non-centered Gaussian fields on sphere as follows.
\begin{theorem}\label{Thm:MECSphere}
Let $X = \{X(t): t\in \mathbb{S}^N \}$ be a Gaussian random field such that $X(t) = Z(t) + m(t)$, where $Z$ is a centered unit-variance isotropic Gaussian random field on $\mathbb{S}^N$ with covariance \eqref{Eq:Schoenberg} and $m$ is the mean function of $X$. If $X$ satisfies conditions $({\bf H}1)$ and $({\bf H}2')$, then
\begin{equation}\label{Eq:MECSphere}
\begin{split}
&\quad\E\{\chi(A_u(X,\mathbb{S}^N))\}\\
&= \frac{1}{(2\pi)^{(N+1)/2}} \int_{\Theta} \phi(\theta)d\theta \int_u^\infty dx \exp\left\{-\frac{1}{2} \left[(x-\wt{m}(\theta))^2 + (C')^{-1}\|\nabla \wt{m}(\theta))\|^2\right]\right\}\\
&\quad \times \left[ \sum_{j=0}^N \frac{(-1)^j}{(N-j)!}\left(\sum_{i=0}^{\lfloor j/2 \rfloor} \frac{(-1)^i(N-j+2i)!}{i!2^i}(C')^{\frac{N}{2}-j+i}(C'-1)^iS_{j-2i}\left(\nabla^2 \wt{m}(\theta)\right)\right) x^{N-j}\right],
\end{split}
\end{equation}
where $\Theta =[0, \pi]^{N-1}\times[0, 2 \pi)$, $\phi(\theta) = \prod_{i=1}^{N-1}(\sin \theta_i)^{N-i}$, and $C'$ and $S_{j-2i}(\cdot)$ are defined respectively in \eqref{Def:C' and C''}  and Lemma \ref{Lem:det of Delta}.
\end{theorem}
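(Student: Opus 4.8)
The plan is to follow the proof of Theorem~\ref{Thm:MEC} almost verbatim, replacing the rectangle face-decomposition by the manifold Euler characteristic formula~\eqref{Eq:def of Euler charac T} and the stationary spectral data by Lemma~\ref{Lem:derivatives of covariance}. Passing to the spherical coordinates $\theta$, I would start from $\chi(A_u(X,\mathbb{S}^N)) = (-1)^N\sum_{i=0}^N (-1)^i \mu_i(\mathbb{S}^N)$ and apply the Kac--Rice metatheorem together with the a.s.\ identity $\sum_{i=0}^N(-1)^i|{\rm det}\,\nabla^2\wt X(\theta)|\mathbbm{1}_{\{\nabla^2\wt X(\theta)\in\mathcal{D}_i\}} = {\rm det}\,\nabla^2\wt X(\theta)$, where $\mathcal{D}_i$ is the set of $N\times N$ matrices of index $i$. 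Working in an orthonormal frame and using that the Riemannian volume element in these coordinates is exactly $\phi(\theta)=\prod_{i=1}^{N-1}(\sin\theta_i)^{N-i}$, this writes $\E\{\chi(A_u(X,\mathbb{S}^N))\}$ as
\[
(-1)^N\int_\Theta \phi(\theta)\,d\theta\, p_{\nabla\wt X(\theta)}(0)\int_u^\infty \E\{{\rm det}\,\nabla^2\wt X(\theta)\mid \wt X(\theta)=x\}\,p_{\wt X(\theta)}(x)\,dx .
\]

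Lemma~\ref{Lem:derivatives of covariance} supplies all the Gaussian ingredients: $\nabla\wt X(\theta)\sim \mathcal{N}(\nabla\wt m(\theta),C'I_N)$ is independent of $(\wt X(\theta),\nabla^2\wt X(\theta))$, so $p_{\nabla\wt X(\theta)}(0)=(2\pi C')^{-N/2}\exp\{-\tfrac{1}{2C'}\|\nabla\wt m(\theta)\|^2\}$ and $p_{\wt X(\theta)}(x)=(2\pi)^{-1/2}\exp\{-\tfrac12(x-\wt m(\theta))^2\}$, which together yield the prefactor $(2\pi)^{-(N+1)/2}$ and the exponential $\exp\{-\tfrac12[(x-\wt m(\theta))^2+(C')^{-1}\|\nabla\wt m(\theta)\|^2]\}$ of~\eqref{Eq:MECSphere}. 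It then remains to compute $\E\{{\rm det}\,\nabla^2\wt X(\theta)\mid \wt X(\theta)=x\}$. Exactly as in Theorem~\ref{Thm:MEC}, I would normalize by $Q=(C')^{-1/2}I_N$ (so that the gradient covariance $C'I_N$ becomes the identity, ${\rm det}(C'I_N)=(C')^N$, and ${\rm Cov}(\wt X,(Q\nabla^2\wt Z Q)_{ij})=-\delta_{ij}$) and reduce the problem to evaluating $\E\{{\rm det}(\Delta+(C')^{-1}\nabla^2\wt m(\theta)-xI_N)\}$, where $\Delta$ is the centered symmetric Gaussian matrix coming from the conditioned, normalized Hessian.

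The one genuinely new point is the covariance of $\Delta$. Combining the last line of Lemma~\ref{Lem:derivatives of covariance} with the conditioning on $\wt X(\theta)=x$ gives
\[
\E\{\Delta_{ij}\Delta_{kl}\}=(C')^{-2}C''(\delta_{ij}\delta_{kl}+\delta_{ik}\delta_{jl}+\delta_{il}\delta_{jk})+\big((C')^{-1}-1\big)\delta_{ij}\delta_{kl}.
\]
In the Euclidean stationary case the corresponding matrix had covariance of the shape $\mathcal{E}-\delta_{ij}\delta_{kl}$ with $\mathcal{E}$ fully symmetric, so Proposition~\ref{Prop:Laplace expansion} applied directly. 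Here the first, fully symmetric term again cancels in the Wick expansion, but the \emph{lone} term $((C')^{-1}-1)\delta_{ij}\delta_{kl}$ — which is absent on flat space and reflects the curvature of the sphere — carries a general constant $c:=(C')^{-1}-1$ in place of $-1$. I therefore expect the main work to be a mild extension of Lemma~\ref{Lem:det of Delta} and Proposition~\ref{Prop:Laplace expansion} to symmetric Gaussian matrices with $\E\{\Delta_{ij}\Delta_{kl}\}=\mathcal{E}(i,j,k,l)+c\,\delta_{ij}\delta_{kl}$, $\mathcal{E}$ fully symmetric: the same cancellation argument goes through (products containing any $\mathcal{E}$-factor cancel by symmetry), except that each of the $k$ Wick pairs now contributes $c$, so every $(-1)^k$ in those statements becomes $c^k$. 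Equivalently one may write $\Delta=\Delta^{\rm sym}+\eta I_N$ with $\eta\sim \mathcal{N}(0,c)$ independent of $\Delta^{\rm sym}$, integrate out $\Delta^{\rm sym}$ by the identity $\E\{{\rm det}(\Xi_N+M)\}={\rm det}(M)$, and finish with the Gaussian moments of $\eta$; since the resulting identity is a polynomial in $c$, it is valid for every sign of $c$ (which matters, as $C'>1$, i.e.\ $c<0$, is allowed).

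Finally I would substitute $S_{n-2k}((C')^{-1}\nabla^2\wt m)=(C')^{-(n-2k)}S_{n-2k}(\nabla^2\wt m)$, multiply by ${\rm det}(C'I_N)=(C')^N$ and by the density prefactor $(2\pi)^{-(N+1)/2}(C')^{-N/2}$, and fold in the overall sign $(-1)^N$ from~\eqref{Eq:def of Euler charac T}. Collecting the coefficient of $x^{N-j}$ and using $c^i=((C')^{-1}-1)^i=(-1)^i(C')^{-i}(C'-1)^i$, the combined $C'$-exponent becomes $(C')^{N/2-j+i}$, the sign becomes $(-1)^j(-1)^i$, and the factor $(C'-1)^i$ appears, matching~\eqref{Eq:MECSphere} term by term. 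The only steps demanding care are the manifold Kac--Rice bookkeeping (choice of frame, the volume element $\phi$, and the identification of $\nabla^2\wt X$ with $E_iE_j\wt X$ at critical points) and the final power-counting; all of the probabilistic content is concentrated in the extended determinant lemma of the previous paragraph.
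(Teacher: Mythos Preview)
Your proposal is correct and follows the same overall architecture as the paper: Kac--Rice on the boundaryless manifold via~\eqref{Eq:def of Euler charac T}, Lemma~\ref{Lem:derivatives of covariance} for the Gaussian densities, and a determinant computation for the conditioned Hessian. The difference is in how that last step is organized. The paper does \emph{not} keep the normalization $Q=(C')^{-1/2}I_N$; instead it rescales by $(C'^2-C')^{-1/2}I_N$ (or its sign variant) so that the conditioned matrix lands exactly in the $\mathcal{E}-\delta_{ij}\delta_{kl}$ or $\mathcal{F}$ form of Proposition~\ref{Prop:Laplace expansion}, and this forces a three-way case split $C'>1$, $C'<1$, $C'=1$, with the middle case appealing to a ``slightly revised'' version of that proposition; the unified expression~\eqref{Eq:MECSphere} is then recognized only after the three answers are written side by side. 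You instead keep the same normalization as in Theorem~\ref{Thm:MEC} and push the spherical correction into a one-parameter extension of Lemma~\ref{Lem:det of Delta}/Proposition~\ref{Prop:Laplace expansion} (covariance $\mathcal{E}+c\,\delta_{ij}\delta_{kl}$ with $c=(C')^{-1}-1$, replacing each $(-1)^k$ by $c^k$), which handles all values of $C'$ in one stroke and produces~\eqref{Eq:MECSphere} directly. Your route is a bit cleaner and makes the link with the rectangle case more transparent; the paper's route has the minor advantage of quoting Proposition~\ref{Prop:Laplace expansion} verbatim in two of the three cases. Your power-counting and sign tracking in the last paragraph are correct.
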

\begin{proof} \ Since $\mathbb{S}^N$ is a smooth and compact manifold without boundary, it follows from \eqref{Eq:def of Euler charac T}, Lemma \ref{Lem:derivatives of covariance} and the Kac-Rice metatheorem that
\begin{equation}\label{Eq:KacRiceSphere}
\begin{split}
&\quad E\{\chi(A_u(X,\mathbb{S}^N))\}\\
&= (-1)^N \int_{\Theta} \phi(\theta)d\theta \int_u^\infty dx p_{\nabla \wt{X}(\theta)}(0) p_{\wt{X}(\theta)}(x)\E\{ {\rm det} \nabla^2 \wt{X}(\theta)| \wt{X}(\theta)=x \}\\
&= \frac{(-1)^N}{(2\pi)^{(N+1)/2}(C')^{N/2}} \int_{\Theta} \phi(\theta)d\theta \int_u^\infty dx \exp\left\{-\frac{1}{2} \left[(x-\wt{m}(\theta))^2 + C'^{-1}\|\nabla \wt{m}(\theta))\|^2\right]\right\}  \\
&\quad \times \E\{ {\rm det} \nabla^2 \wt{X}(\theta)| \wt{X}(\theta)=x \}.
\end{split}
\end{equation}
We only need to compute $\E\{ {\rm det} \nabla^2 \wt{X}(\theta)| \wt{X}(\theta)=x \}$.

Case 1: $C'>1$. By Lemma \ref{Lem:derivatives of covariance}, similarly to the proof of Theorem \ref{Thm:MEC}, we get
\begin{equation*}
\begin{split}
&\quad \E\{ {\rm det} \nabla^2 \wt{X}(\theta)| \wt{X}(\theta)=x \}=\E\{ {\rm det} [\nabla^2 \wt{Z}(\theta)+ \nabla^2 \wt{m}(\theta)] | \wt{X}(\theta)=x \}\\
&=(C'^2-C')^{N/2}\E\{ {\rm det}[(C'^2-C')^{-1/2}\nabla^2 \wt{Z}(\theta)+ (C'^2-C')^{-1/2}\nabla^2 \wt{m}(\theta)] | \wt{X}(\theta)=x \}\\
&=(C'^2-C')^{N/2}\E\{ {\rm det} [\Delta + (C'^2-C')^{-1/2}\nabla^2 \wt{m}(\theta) - C'(C'^2-C')^{-1/2}x I_N]\},
\end{split}
\end{equation*}
where $\Delta=(\Delta_{ij})_{1\leq i,j\leq N}$ and $\Delta_{ij}$ are centered Gaussian variables satisfying
\begin{equation*}
\begin{split}
\E \{\Delta_{ij} \Delta_{kl}\}&= (C'^2-C')^{-1}\E\{\wt{Z}_{ij}(\theta)\wt{Z}_{kl}(\theta)|\wt{X}(\theta)=x\}\\
&=(C'^2-C')^{-1} [C''(\delta_{ij}\delta_{kl}+\delta_{ik}\delta_{jl}+\delta_{il}\delta_{jk}) + C' \delta_{ij}\delta_{kl} - C'^2 \delta_{ij}\delta_{kl} ]\\
&= \mathcal{E}(i,j,k,l)-\delta_{ij}\delta_{kl},
\end{split}
\end{equation*}
and $\mathcal{E}$ is a symmetric function of $i, j, k, l$. It then follows from Proposition \ref{Prop:Laplace expansion} that
\begin{equation}\label{Eq:DetSphere1}
\begin{split}
&\quad \E\{ {\rm det} \nabla^2 \wt{X}(\theta)| \wt{X}(\theta)=x \}\\
&= (C'^2-C')^{N/2} \sum_{j=0}^N \frac{(-1)^{N-j}}{(N-j)!}\left(\sum_{i=0}^{\lfloor j/2 \rfloor} \frac{(-1)^i(N-j+2i)!}{i!2^i}S_{j-2i}\left(\frac{\nabla^2 \wt{m}(\theta)}{\sqrt{C'^2-C'}}\right)\right)\\
&\quad \times \left(\frac{C' x}{\sqrt{C'^2-C'}}\right)^{N-j}.
\end{split}
\end{equation}

Case 2: $C'<1$. It follows from similar discussions in the previous case and a slightly revised version of Proposition \ref{Prop:Laplace expansion} that
\begin{equation}\label{Eq:DetSphere2}
\begin{split}
&\quad \E\{ {\rm det} \nabla^2 \wt{X}(\theta)| \wt{X}(\theta)=x \}\\
&= (C'-C'^2)^{N/2} \sum_{j=0}^N \frac{(-1)^{N-j}}{(N-j)!}\left(\sum_{i=0}^{\lfloor j/2 \rfloor} \frac{(N-j+2i)!}{i!2^i}S_{j-2i}\left(\frac{\nabla^2 \wt{m}(\theta)}{\sqrt{C'-C'^2}}\right)\right)\\
&\quad \times \left(\frac{C' x}{\sqrt{C'-C'^2}}\right)^{N-j}.
\end{split}
\end{equation}

Case 3: $C'=1$. By Lemma \ref{Lem:derivatives of covariance} again,
\begin{equation*}
\begin{split}
\E\{ {\rm det} \nabla^2 \wt{X}(\theta)| \wt{X}(\theta)=x \}&=\E\{ {\rm det} (\nabla^2 \wt{Z}(\theta)+ \nabla^2 \wt{m}(\theta)) | \wt{X}(\theta)=x \}\\
&=\E\{ {\rm det} (\Xi + \nabla^2 \wt{m}(\theta) - x I_N)\},
\end{split}
\end{equation*}
where $\Xi=(\Xi_{ij})_{1\leq i,j\leq N}$ and $\Xi_{ij}$ are centered Gaussian variables satisfying
\begin{equation*}
\begin{split}
\E \{\Xi_{ij} \Xi_{kl}\}= \E\{\wt{Z}_{ij}(\theta)\wt{Z}_{kl}(\theta)|\wt{X}(\theta)=x\}=C''(\delta_{ij}\delta_{kl}+\delta_{ik}\delta_{jl}+\delta_{il}\delta_{jk})
= \mathcal{F}(i,j,k,l),
\end{split}
\end{equation*}
and $\mathcal{F}$ is a symmetric function of $i, j, k, l$. It then follows from Proposition \ref{Prop:Laplace expansion} that
\begin{equation}\label{Eq:DetSphere3}
\begin{split}
\E\{ {\rm det} \nabla^2 \wt{X}(\theta)| \wt{X}(\theta)=x \}= \sum_{j=0}^N (-1)^{N-j}S_j\left(\nabla^2 \wt{m}(\theta)\right)x^{N-j}.
\end{split}
\end{equation}

Plugging respectively \eqref{Eq:DetSphere1}, \eqref{Eq:DetSphere2} and \eqref{Eq:DetSphere3} into \eqref{Eq:KacRiceSphere}, we see that the expected Euler characteristic for all three cases above can be formulated by the same expression \eqref{Eq:MECSphere}.
\end{proof}

\begin{remark}
Let $\wt{m}(\theta)\equiv 0$. Let $\omega_j =   \frac{2\pi^{(j+1)/2}}{\Gamma((j+1)/{2})}$ be the spherical area of the $j$-dimensional unit sphere. Notice that Hermite polynomials have the following properties:
\begin{equation*}
\begin{split}
\int_u^\infty H_n(x)e^{-x^2/2}dx &= H_{n-1}(u)e^{-u^2/2}, \\
x^n &= n!\sum_{k=0}^{\lfloor n/2 \rfloor} \frac{1}{k!2^k(n-2k)!} H_{n-2k}(x),
\end{split}
\end{equation*}
where $n\ge 0$ and $H_{-1}(x)=\sqrt{2\pi}\Psi(x)e^{x^2/2}$. Applying Theorem \ref{Thm:MECSphere}, together with the properties above and certain combinatorial tricks, we obtain
\begin{equation*}
\begin{split}
\E\{\chi(A_u(X,\mathbb{S}^N))\}&= \frac{\omega_N}{(2\pi)^{(N+1)/2}} \sum_{n=0}^{\lfloor N/2 \rfloor} (C')^{(N-2n)/2}\binom{N}{2n}(2n-1)!!H_{N-2n-1}(u)e^{-u^2/2}\\
&=\sum_{j=0}^N  (C')^{j/2} \mathcal{L}_j (\mathbb{S}^N) \rho_j(u),
\end{split}
\end{equation*}
where $\rho_0(u)= \Psi(u)$, $\rho_j(u) = (2\pi)^{-(j+1)/2} H_{j-1} (u) e^{-u^2/2}$ for $j\geq 1$ and
\begin{equation*}
\begin{split}
\mathcal{L}_j (\mathbb{S}^N) = \left\{
  \begin{array}{l l}
     2 \binom{N}{j}\frac{\omega_N}{\omega_{N-j}} & \quad \text{if $N-j$ is even,}\\
     0 & \quad \text{otherwise}
   \end{array} \right.
\end{split}
\end{equation*}
(for $j=0, 1, \ldots, N$) are the Lipschitz-Killing curvatures of $\mathbb{S}^N$ [cf. Eq. (6.3.8) in Adler and Taylor (2007)]. This coincides with the formula of the expected Euler characteristic for centered isotropic Gaussian fields on sphere obtained in Cheng and Xiao (2014a) via a geometric approach. However, the result in Cheng and Xiao (2014a) is still more general for studying centered isotropic Gaussian fields on sphere since it is also applicable when the parameter sets are subsets of $\mathbb{S}^N$.
\end{remark}

\bibliographystyle{plain}
\begin{small}

\end{small}

\bigskip

\begin{quote}
\begin{small}

\textsc{Dan Cheng}: Department of Statistics, North Carolina State
University, 2311 Stinson Drive, Campus Box 8203, Raleigh, NC 27695, U.S.A.\\
E-mail: \texttt{dcheng2@ncsu.edu}
\end{small}
\end{quote}

\end{document}